\newtheorem{Theorem}{Theorem}
\newtheorem{Proposition}{Proposition}
\newtheorem{Lemma}{Lemma}
\newtheorem{Remark}{Remark}
\newcommand{\ds}{{\mathrm{d}}s}
\newcommand{\dx}{{\mathrm{d}}x}
\newcommand{\dy}{{\mathrm{d}}y}
\newcommand{\Id}{{\mathbf{1}}}
\newcommand{\dom}{{\mathrm{dom}~}}
\newcommand{\var}{\varepsilon}
\begin{document}

\title{Spectrum of the Dirichlet Laplacian in waveguides with parallel cross-sections}
\author{Alessandra A. Verri}

\date{\today}

\maketitle 

\begin{abstract}
Let $\Omega \subset \mathbb R^3$ be a waveguide which is obtained by translating 
a cross-section in a constant direction along an unbounded spatial curve. 
Consider $-\Delta_{\Omega}^D$ the Dirichlet Laplacian operator in $\Omega$.
Under the condition that the tangent vector of the reference curve admits a finite limit at infinity, we find the essential spectrum
of $-\Delta_{\Omega}^D$.
Then, we state sufficient conditions 
that give rise to a non-empty discrete spectrum for $-\Delta_{\Omega}^D$;
in particular, we show that the number
of discrete eigenvalues can be arbitrarily large since the waveguide is thin enough.
\end{abstract}








\section{Introduction}

Let $\Omega$ be an unbounded quantum waveguide in $\mathbb R^n$, $n=2,3$, and denote by $-\Delta_\Omega^D$ the 
Dirichlet Laplacian operator in $\Omega$. The spectrum of this operator has been extensively studied
in the last years \cite{bori1, bori2, bmt, david,briet, popoff01, popoff02, duclosfk, 
clark, pavelduclos, duclos, ekholmk, bookexner, exnerseba, friedcabs, gold, grush01, davidtversusb, davidruled,
davidalde, davidkr, kovsac}. In fact, the subject is non-trivial since the results depend on the geometry of $\Omega$ 
\cite{david, duclosfk, duclos, exnerseba, davidtversusb, davidruled}.
In the particular case where $\Omega$ is a straight waveguide, it is known that its spectrum is purely absolutely continuous and 
there are no
discrete eigenvalues. However, several works show that the situation changes according to the geometry of $\Omega$;
for example, curved waveguides or those  with deformation on its boundary can give rise to a
non-empty discrete spectrum for $-\Delta_\Omega^D$ \cite{brietnew, exnerseba, gold, davidtversusb}.
In the next paragraphs, we recall some papers and results in the literature; after that, we present the goal of this work.

Let $\Gamma: \mathbb R \to \mathbb R^2$ be a $C^\infty$ plane curve parameterized by its arc-length $x$
and denote by $k(x)$ its curvature at the point $\Gamma(x)$. Consider a strip $\Omega$ constructed by moving a bounded 
segment $(a,b) \subset \mathbb R$ along $\Gamma$ with respect to its normal vector field;
$\Omega$ is delimited by two parallel curves.
In the pioneering paper \cite{exnerseba},
the authors studied the spectral problem of $-\Delta_\Omega^D$. In particular, 
they proved the existence of discrete spectrum for the operator under the 
conditions that $k(x)\neq0$, for some $x \in \mathbb R$, and that $k(x)$ decays fast enough at infinity;
in addition, 
the authors had assumed some regularity for $\Gamma$ which were
relaxed latter \cite{gold,davidkr,renger}.

Now, denote by $\Gamma: \mathbb R \to \mathbb R^3$  a $C^3$ spatial curve parameterized by its arc-length $x$
which possesses an appropriate Frenet frame;  $k(x)$ and $\tau(x)$ denote its curvature and torsion at the point $\Gamma(x)$, respectively.
Let $\omega$ be a bounded open connected set in $\mathbb R^2$.
Consider the case where $\Omega$ is an unbounded waveguide obtained 
by moving the cross-section $\omega$ along the curve $\Gamma$ according to the Frenet referential.
At each point of $\Gamma$ the region
$\omega$ also may present a (continuously differentiable) rotation angle $\alpha(x)$. 
In this situation, several aspects of the spectral problem for $-\Delta_\Omega^D$ were studied in  
\cite{briet, popoff01, duclosfk, clark, duclos, ekholmk, bookexner,brietnew, exnerseba, gold, davidtversusb}. However, we emphasize
the paper \cite{davidtversusb} where the authors presented, together with new proofs and results, a review
of the geometric effects of $\Omega$ on  $\sigma(-\Delta_\Omega^D)$.
There was discussed  how the spectrum depends on two independent geometric deformations:
bending ($k \neq 0$) and twisting  ($w$ is not rotationally invariant with respect to the origin and 
$\tau-\alpha' \neq 0$). In particular,
the assumptions $\tau+\alpha'=0$, $k \neq0$ and $k(x) \to 0$, $|x| \to \infty$,
imply $\sigma_{dis}(-\Delta_\Omega^D)\neq \emptyset$.
Some types of deformation in straight waveguides can also give rise to a non-empty discrete spectrum for $-\Delta_\Omega^D$. For example, let $\zeta$ be a bounded function
such that supp $\zeta \subset [-x_0, x_0]$, for some $x_0 > 0$. Consider the case where 
$w$ is a non-circular cross-section and the angle rotation governed by the function $\alpha(x)$ satisfies $\alpha'(x) = \alpha_0 - \zeta(x)$, $\alpha_0 \in \mathbb R$. 
This situation was considered in \cite{briet}. One of the results of the authors is that
the assumption $\int_{-x_0}^{x_0} (\alpha'(x)^2 - \alpha_0^2) \dx < 0$ ensures the existence of discrete eigenvalues for
$-\Delta_\Omega^D$.

In this paper we consider the Dirichlet Laplacian operator restricted to a three-dimensional waveguide whose 
geometry is inspired by a two-dimensional model. Namely, in the recent paper \cite{david} the authors
introduced a new model of strip to study the spectral problem of $-\Delta_\Omega^D$.
In that work,
$\Omega$ is a strip in $\mathbb R^2$ 
which is built by translating a segment oriented in a constant direction along an unbounded curve in the plane. More precisely,
let $h: \mathbb R \to \mathbb R$ be a locally Lipschitz continuous function. Take a positive number $d>0$ and define
$\Omega=\{(x,y) \in \mathbb R^2: h(x) < y < h(x) + d\}$.
Assume that $h(x)$ is differentiable almost everywhere and the derivative $h'(x)$ admits 
a limit at infinity: $h'(x) \to \beta \in \mathbb R \cup \{\infty\}, |x| \to \infty$.
The spectrum of the operator $-\Delta_\Omega^D$ was carefully studied and the model 
covers different effects: purely essential spectrum, discrete spectrum or a combination of both.
In particular, if $h\in L_{loc}^\infty(\mathbb R)$, then $\sigma_{ess} (-\Delta_\Omega^D) = [(1+\beta^2) \pi^2/d^2, \infty)$.
In addition, if $h'(x)^2-\beta^2 \in L^1(\mathbb R)$ and $\int_\mathbb R (h'(x)^2 - \beta^2) \dx < 0$, 
then $\inf \sigma (-\Delta_\Omega^D) <  (1+\beta^2) \pi^2/d^2$, i.e.,
$\sigma_{dis} (-\Delta_\Omega^D) \neq \emptyset$.
In the next paragraphs we present the formal construction of the
waveguide which will be considered in this work  and we give more details of the problem.

Denote by $\{e_1, e_2, e_3\}$ the canonical basis of $\mathbb R^3$. Pick $S \neq \emptyset$;  
an open, bounded and connected subset of~$\mathbb R^2$ with $C^2$-boundary.
Let $f,g: \mathbb R \to \mathbb R$ be differentiable functions so that $f',g' \in L^\infty(\mathbb R)$, and
$r: \mathbb R \to \mathbb R^3$ the spatial curve given by
\begin{equation}\label{refcurveint}
r(x) = (x, f(x), g(x)), \quad x \in \mathbb R.
\end{equation}
Define the mapping 
\begin{equation}\label{cahngeofcoordif1}
\begin{array}{llll}
{\cal L} : &   \mathbb R \times S    & \to      & \mathbb R^3 \\
                       & (x, y_1, y_2)  & \mapsto  & r(x) + y_1 e_2 + y_2 e_3
\end{array}.
\end{equation}
We define the waveguide
\[\Omega := {\cal L}(\mathbb R \times S).\]
Roughly speaking, $\Omega$ is obtained by translating
the region $S$ along the curve $r(x)$ so that, at each point of it,
$S$ is parallel to the plane generated by $\{e_2, e_3\}$.

Let $-\Delta_\Omega^D$ be the Dirichlet Laplacian operator in $\Omega$. 
In other words,
$-\Delta_{\Omega}^D$ is the self-adjoint operator associated with the quadratic form
\begin{equation}\label{quadinform1}
a(\varphi) : = 
\int_{\Omega}  |\nabla \varphi|^2 \ds, \quad \dom a = H_0^1(\Omega).
\end{equation}

The goal of this paper is to study the spectral problem of $-\Delta_\Omega^D$. 
Inspired by \cite{david}, we assume that
\begin{equation}\label{condderivative}
\lim_{|x| \to \infty} f'(x) =: \beta_1, \quad \lim_{|x| \to \infty} g'(x) =: \beta_2, \quad
\beta_1, \beta_2 \in \mathbb R.
\end{equation}
We find the essential spectrum of $-\Delta_\Omega^D$ and we discuss conditions that ensure the existence of discrete spectrum for this operator. 

Denote by $y:=(y_1, y_2)$ a point of $S$, $\partial_{y_1} := \partial/\partial y_1$,
$\partial_{y_2} := \partial/\partial y_2$. Consider the  two-dimensional operator
\begin{equation}\label{fourpp0}
H_{\beta_1, \beta_2}(0):=-(\beta_1 \partial_{y_1} + \beta_2 \partial_{y_2})^2 
- \partial_{y_1}^2 - \partial_{y_2}^2,
\end{equation}
$\dom H_{\beta_1, \beta_2} (0) = H^2(S) \cap H_0^1(S)$.
Denote by $E_1(0)$ the first eigenvalue of $H_{\beta_1, \beta_2} (0)$ and by $v_1$ the corresponding eigenfunction.
Since $H_{\beta_1, \beta_2}(0)$ is an elliptic operator with real coefficients, $E_1(0)$ is simple and $v_1$
can be chosen to be real and positive in $S$; see, e.g., Chapter 6 of \cite{evans}.
Our first result states
 
\begin{Theorem}\label{sigmadiscintr}
Suppose that the conditions in (\ref{condderivative}) hold. Then,
\[\sigma_{ess} (-\Delta_\Omega^D) = \left[E_1(0), \infty\right).\]
\end{Theorem}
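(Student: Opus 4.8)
The plan is to straighten $\Omega$ and then compare the resulting operator with its constant--coefficient limit at infinity. First I would use the diffeomorphism $\cal L$ of \eqref{cahngeofcoordif1}; since $\det D{\cal L}\equiv 1$, the map $\varphi\mapsto\psi:=\varphi\circ{\cal L}$ is unitary from $L^2(\Omega)$ onto $L^2(\mathbb R\times S)$, and a direct computation (completing the square in the pulled--back metric) turns the form \eqref{quadinform1} into
\[ a(\varphi)=\int_{\mathbb R\times S}\Big[\big(\partial_x\psi-f'\partial_{y_1}\psi-g'\partial_{y_2}\psi\big)^2+(\partial_{y_1}\psi)^2+(\partial_{y_2}\psi)^2\Big]\dx\dy,\quad \dom=H_0^1(\mathbb R\times S). \]
Call $\tilde H$ the associated self--adjoint operator; it is unitarily equivalent to $-\Delta_\Omega^D$, so it suffices to compute $\sigma_{ess}(\tilde H)$.

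Second, I would introduce the limit operator $H_\infty$ on $L^2(\mathbb R\times S)$, associated with the form $a_\infty$ obtained by freezing $f'\equiv\beta_1$, $g'\equiv\beta_2$ in the form above. A Fourier transform in $x$ fibers $H_\infty$ as a direct integral over $\xi$ of operators $H_{\beta_1,\beta_2}(\xi)$ on $L^2(S)$, each reducing to \eqref{fourpp0} at $\xi=0$ and each self--adjoint because $i(\beta_1\partial_{y_1}+\beta_2\partial_{y_2})$ is symmetric on $H_0^1(S)$. Writing $\psi=\rho\,e^{i\theta}$ in polar form, with $\vec\beta=(\beta_1,\beta_2)$ and $\nabla=(\partial_{y_1},\partial_{y_2})$, one gets for $\|\psi\|=1$
\[ \langle\psi,H_{\beta_1,\beta_2}(\xi)\psi\rangle=\int_S\big[(\vec\beta\cdot\nabla\rho)^2+|\nabla\rho|^2\big]\dy+\int_S\rho^2\big[(\xi-\vec\beta\cdot\nabla\theta)^2+|\nabla\theta|^2\big]\dy. \]
The first integral is $\ge E_1(0)$ (it is the form of $H_{\beta_1,\beta_2}(0)$ on the real function $\rho$) and the second is nonnegative, whence $E_1(\xi)\ge E_1(0)$; the real trial state $v_1$ gives $E_1(\xi)\le E_1(0)+\xi^2$, while bounding $|\nabla\theta|^2\ge(\vec\beta\cdot\nabla\theta)^2/|\vec\beta|^2$ and minimizing the second integrand pointwise yields $E_1(\xi)\ge\xi^2/(1+|\vec\beta|^2)$. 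Thus the lowest band is continuous, attains its minimum $E_1(0)$ at $\xi=0$, and tends to $+\infty$, so by the intermediate value theorem $\bigcup_\xi\{E_1(\xi)\}=[E_1(0),\infty)$ and therefore $\sigma(H_\infty)=[E_1(0),\infty)$.

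Third, I would prove $[E_1(0),\infty)\subseteq\sigma_{ess}(\tilde H)$ by a singular sequence argument: for $\lambda\in\sigma(H_\infty)$ take a Weyl sequence for $H_\infty$, localize it in a bounded $x$--window, and translate that window to $|x|\to\infty$. Because $f'(x)\to\beta_1$ and $g'(x)\to\beta_2$, the coefficients of $a$ are uniformly close to those of $a_\infty$ on the compact support, so the translated functions are approximate eigenfunctions of $\tilde H$ converging weakly to $0$, placing $\lambda$ in $\sigma_{ess}(\tilde H)$. For the reverse inclusion I would use a Persson--type estimate: since $f'-\beta_1,\,g'-\beta_2\to0$, on functions supported in $\{|x|>R\}$ the form $a$ is bounded below by $a_\infty$ up to an error $o_R(1)$, so that $\inf\sigma_{ess}(\tilde H)=\lim_{R\to\infty}\inf\{\,a(u)/\|u\|^2:\operatorname{supp}u\subseteq\{|x|>R\}\,\}\ge E_1(0)$, giving $\sigma_{ess}(\tilde H)\subseteq[E_1(0),\infty)$. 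The two inclusions yield the claim.

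The main obstacle I anticipate is the third step, specifically the treatment of the \emph{first--order} cross terms $f'\partial_x\psi\,\partial_{y_1}\psi$ and $g'\partial_x\psi\,\partial_{y_2}\psi$: the perturbation $a-a_\infty$ is not a decaying potential but involves products of first derivatives weighted by $f'-\beta_1$, $g'-\beta_2$, $f'^2-\beta_1^2$, and so on. Controlling these in both the Persson estimate and the Weyl construction requires Cauchy--Schwarz and the fact that the weights vanish at infinity, so the gradient terms they multiply are absorbed into the leading quadratic form with an arbitrarily small constant. The secondary delicate point is the variational identity $\inf_\xi E_1(\xi)=E_1(0)$, since a naive expansion of $H_{\beta_1,\beta_2}(\xi)$ produces the indefinite cross term $2i\xi(\beta_1\partial_{y_1}+\beta_2\partial_{y_2})$ whose sign is only made transparent by the polar decomposition above.
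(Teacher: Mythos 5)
Your proposal is correct and follows the paper's overall architecture---straighten $\Omega$ via ${\cal L}$, pass to the constant-coefficient operator obtained by freezing $f'\equiv\beta_1$, $g'\equiv\beta_2$, and analyze that operator by a direct-integral (partial Fourier) decomposition---but both key ingredients are established by genuinely different means, so a comparison is worthwhile. For the fiber analysis, the paper shows that $\{H_{\beta_1,\beta_2}(p)\}_p$ is a type A analytic family (Lemma \ref{lemmanalytic}, via Kato), so each band $E_n(p)$ is real-analytic and diverges as $p\to\pm\infty$, and it obtains the crucial bound $E_1(p)\ge E_1(0)$ by the ground-state substitution $\psi=\varphi v_1$ (Proposition \ref{lemmaessspecve12}); your polar decomposition $\psi=\rho e^{i\theta}$ reaches the same bound more directly and makes the sign of the cross term $2ip(\beta_1\partial_{y_1}+\beta_2\partial_{y_2})$ transparent, at the cost of the usual care where $\psi$ vanishes (where $\theta$ is undefined; one uses that $\nabla\psi=0$ a.e.\ on $\{\psi=0\}$) and of a short missing justification that $E_1(\cdot)$ is continuous away from $\xi=0$ (your two-sided bounds give continuity only at the origin; continuity everywhere follows because the cross term is form-bounded relative to $h_{\beta_1,\beta_2}(0)$, locally uniformly in $\xi$). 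The paper's heavier analyticity route buys something your elementary argument does not: the bands are nonconstant analytic functions, hence $\sigma(H_{\beta_1,\beta_2})$ is purely absolutely continuous (the Remark following Lemma \ref{lemmanalytic}). For the stability of the essential spectrum, the paper proves both inclusions through a single Weyl-type characterization in the dual norm of $\dom b_{f',g'}$ with supports escaping to infinity (Lemma \ref{lemmasimilar}, adapted from \cite{david}), whereas you split the work into a translated-Weyl-sequence argument for $[E_1(0),\infty)\subseteq\sigma_{ess}(\tilde H)$ and a Persson-type bound for the reverse inclusion; this is equally valid, but Persson's theorem must itself be justified for variable-coefficient forms on a cylinder---the paper's Lemma \ref{lemmasimilar} is precisely the substitute for that step---so citing it is the one place where you lean on a result not proved in this setting. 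You correctly identified the real difficulty on both sides: the perturbation is not a decaying potential but consists of first-order cross terms weighted by $f'-\beta_1$, $g'-\beta_2$, which must be absorbed into the leading form with arbitrarily small constants via Cauchy--Schwarz.
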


The proof of Theorem \ref{sigmadiscintr} is presented in Section \ref{sectionessspec}. 

The next step is to analyze 
the existence of discrete eigenvalues for $-\Delta_\Omega^D$. Define the constants
\[ A: = \int_S \left| \frac{\partial v_1}{\partial y_1} \right|^2 \dy, \quad
B:= \int_S \frac{\partial v_1}{\partial y_1} \frac{\partial v_1}{\partial y_2} \dy, \quad
C: = \int_S \left| \frac{\partial v_2}{\partial y_1} \right|^2 \dy,\]
and the function
\[V(x) := A (f'(x)^2 - \beta_1^2) + 2 B f'(x) g'(x) + C (g'(x)^2-\beta_2^2), \quad x \in \mathbb R.\]

\begin{Theorem}\label{theomainimpdis}
Suppose that the conditions in (\ref{condderivative}) hold and $V(x) \in L^1(\mathbb R)$.
If $\int_\mathbb R V(x) \dx < 0$, then 
\[\inf \sigma(-\Delta_\Omega^D) < E_1(0),\] 
i.e., $\sigma_{dis} (-\Delta_\Omega^D) \neq \emptyset$.
\end{Theorem}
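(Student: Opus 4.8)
The plan is to straighten $\Omega$ to the cylinder $\R\times S$ and then run a one-parameter variational argument. By Theorem~\ref{sigmadiscintr} we have $\inf\sigma_{ess}(-\Delta_\Omega^D)=E_1(0)$, so it suffices to produce a single $\psi$ in the form domain with $a(\psi)<E_1(0)\,\|\psi\|^2$; the min-max principle then places a point of $\sigma(-\Delta_\Omega^D)$ strictly below $E_1(0)$, and since everything below $E_1(0)$ is separated from the essential spectrum, that point is a discrete eigenvalue.

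First I would use $\mathcal L$ to pass to straight coordinates. The map $\mathcal L$ is a bijection of $\R\times S$ onto $\Omega$ (one recovers $x=X$, $y_1=Y-f(X)$, $y_2=Z-g(X)$) with unit Jacobian, so pullback by $\mathcal L$ is a unitary operator $L^2(\Omega)\to L^2(\R\times S)$ carrying $H_0^1(\Omega)$ onto $H_0^1(\R\times S)$. Writing $\psi$ for the transported function and using the chain rule, the form $a$ becomes
\begin{equation}\label{transformedform}
\tilde a(\psi)=\int_{\R\times S}\left(\left|\partial_x\psi-f'\partial_{y_1}\psi-g'\partial_{y_2}\psi\right|^2+|\partial_{y_1}\psi|^2+|\partial_{y_2}\psi|^2\right)\dx\,\dy,
\end{equation}
in which only the bounded functions $f',g'$ appear, and whose transverse part at infinity is governed by $H_{\beta_1,\beta_2}(0)$.

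For the trial state I would take a product $\psi_n(x,y)=\phi_n(x)\,v_1(y)$, with $v_1$ the real positive ground state of $H_{\beta_1,\beta_2}(0)$ normalized by $\|v_1\|_{L^2(S)}=1$ and $\phi_n$ a longitudinal cutoff to be chosen. Because $v_1$ vanishes on $\partial S$, one has $\int_S v_1\,\partial_{y_j}v_1\,\dy=\tfrac12\int_S\partial_{y_j}(v_1^2)\,\dy=0$ for $j=1,2$, so every cross term coupling $\phi_n'v_1$ to $\phi_n\,\partial_{y_j}v_1$ in (\ref{transformedform}) drops out. Expanding the remaining square and using $E_1(0)=\langle H_{\beta_1,\beta_2}(0)v_1,v_1\rangle$ together with the constants $A,B,C$ gives the pointwise identity
\[
\int_S\left(|f'\partial_{y_1}v_1+g'\partial_{y_2}v_1|^2+|\partial_{y_1}v_1|^2+|\partial_{y_2}v_1|^2\right)\dy-E_1(0)=V(x),
\]
so that $\tilde a(\psi_n)-E_1(0)\,\|\psi_n\|^2=\int_\R|\phi_n'|^2\,\dx+\int_\R\phi_n^2\,V\,\dx$.

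It remains to make this negative: taking a fixed smooth cutoff $\phi$ with $\phi(0)=1$ and compact support and setting $\phi_n(x)=\phi(x/n)$, the kinetic cost is $\int_\R|\phi_n'|^2\,\dx=\tfrac1n\int_\R|\phi'|^2\,\dx\to0$, while $\phi_n\to1$ pointwise and boundedly, so since $V\in L^1(\R)$ dominated convergence yields $\int_\R\phi_n^2\,V\,\dx\to\int_\R V\,\dx<0$; hence the Rayleigh excess is negative for all large $n$. The step I expect to require most care is the pointwise reduction of the transverse energy to $E_1(0)+V(x)$ — verifying that the Dirichlet condition on $v_1$ removes the cross terms and that the product ansatz already yields the sharp potential $V$, so that no first-order transverse corrector (a $\phi_n(x)u(x,y)$ with $u(x,\cdot)\perp v_1$ solving a transverse cell problem for $H_{\beta_1,\beta_2}(0)-E_1(0)$) is needed. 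The only other delicacy is the limit $n\to\infty$, where the kinetic penalty must vanish faster than the potential gain stabilizes; the hypothesis $V\in L^1(\R)$ is exactly what licenses the dominated-convergence step and, incidentally, forces the constant limit of $V$ at infinity to vanish.
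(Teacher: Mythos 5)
Your proposal is correct and takes essentially the same route as the paper's proof: after the same unitary straightening to $\Lambda=\mathbb R\times S$, the paper also uses product trial functions $\psi_n(x,y)=\varphi_n(x)v_1(y)$ (with a plateau cutoff equal to $1$ on $[-n,n]$ instead of your scaled $\phi(x/n)$), obtains the same reduction $b(\psi_n)-E_1(0)\|\psi_n\|^2=\int_{\mathbb R}|\varphi_n'|^2\,\dx+\int_{\mathbb R}V|\varphi_n|^2\,\dx$, and concludes by dominated convergence together with Theorem \ref{sigmadiscintr}. The differences (choice of cutoff, and your explicit remark that $V\in L^1(\mathbb R)$ forces the limit of $V$ at infinity to vanish, which is what makes the stated pointwise identity for the transverse energy consistent) are cosmetic.
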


The proof of this result is presented in Section \ref{sectiondisspec}.
The condition $\int_\mathbb R V(x) \dx < 0$ implies the existence of discrete eigenvalues for
$-\Delta_\Omega^D$. However, it is not a necessary condition for this to happen. 
For example, 

\begin{Theorem}\label{theomaifatend}
Suppose that the conditions in (\ref{condderivative}) hold with $g'=0$.
If $f'(x)^2-\beta_1^2 \in L^1(\mathbb R)$, $f''(x) \in L_{loc}^1(\mathbb R)$,
$\int_\mathbb R (f'(x)^2-\beta_1^2) \dx = 0$ and $f'$ is not constant, then 
\[\inf \sigma(-\Delta_\Omega^D) < E_1(0),\] 
i.e., $\sigma_{dis} (-\Delta_\Omega^D) \neq \emptyset$.
\end{Theorem}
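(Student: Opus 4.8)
The plan is to straighten the waveguide via the map ${\cal L}$, reduce the eigenvalue problem to a one‑dimensional Schrödinger inequality through a separable trial function, and then handle the borderline case $\int_{\mathbb R}V\,\dx=0$ by a perturbative argument. Since the Jacobian of ${\cal L}$ is unimodular, the associated unitary operator $U\colon L^2(\Omega)\to L^2(\mathbb R\times S)$ turns $a$ into
\[
\tilde a(\varphi)=\int_{\mathbb R\times S}\Big(\big|\partial_x\varphi-f'(x)\,\partial_{y_1}\varphi\big|^2+|\nabla_y\varphi|^2\Big)\,\dx\,\dy,\qquad \dom\tilde a=H_0^1(\mathbb R\times S),
\]
where I have already used the hypothesis $g'=0$. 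By Theorem \ref{sigmadiscintr}, $\sigma_{ess}(-\Delta_\Omega^D)=[E_1(0),\infty)$, so it suffices to exhibit $\varphi\in H_0^1(\mathbb R\times S)$ with $\tilde a(\varphi)<E_1(0)\,\|\varphi\|^2$. I would insert the ansatz $\varphi(x,y)=\psi(x)\,v_1(y)$; using $\int_S v_1^2\,\dy=1$, the vanishing of $\int_S v_1\,\partial_{y_1}v_1\,\dy$ (integration by parts, $v_1|_{\partial S}=0$), and the ground‑state relation $E_1(0)=\int_S\big((1+\beta_1^2)|\partial_{y_1}v_1|^2+|\partial_{y_2}v_1|^2\big)\,\dy$, a direct computation collapses the shifted form to the scalar expression
\[
\tilde a(\psi v_1)-E_1(0)\,\|\psi v_1\|^2=\int_{\mathbb R}|\psi'(x)|^2\,\dx+\int_{\mathbb R}V(x)\,|\psi(x)|^2\,\dx,
\]
with $V(x)=A\big(f'(x)^2-\beta_1^2\big)$ (the $B$‑ and $C$‑terms in $V$ drop out since $g'=0$). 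Thus everything reduces to proving that the one‑dimensional operator $-\mathrm{d}^2/\mathrm{d}x^2+V$ on $L^2(\mathbb R)$ has negative infimum.

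Next I would record how the three remaining hypotheses feed into this scalar problem: $f'^2-\beta_1^2\in L^1(\mathbb R)$ gives $V\in L^1(\mathbb R)$; $\int_{\mathbb R}(f'^2-\beta_1^2)\,\dx=0$ gives $\int_{\mathbb R}V\,\dx=0$; and $V\not\equiv0$ follows from the rest. Indeed, $f''\in L_{loc}^1(\mathbb R)$ makes $f'$ locally absolutely continuous, hence continuous, while (\ref{condderivative}) gives $f'(x)\to\beta_1$ as $|x|\to\infty$. Were $V\equiv0$ we would have $f'(x)\in\{\beta_1,-\beta_1\}$ for every $x$, and continuity on the connected line together with $f'(\pm\infty)=\beta_1$ would force $f'\equiv\beta_1$, contradicting that $f'$ is not constant. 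Hence $V$ is a nonzero $L^1$ potential of zero mean.

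The crux is precisely this mean‑zero case: the naive choice $\psi\equiv\mathrm{const}$ (suitably cut off) yields $\int V\psi^2\to\int_{\mathbb R}V\,\dx=0$ and produces no negative energy, so the main obstacle is extracting a strictly negative contribution at the next order. I would use a two‑scale trial function $\psi_{\var}=\chi_L\,(1+\var w)$, where $w\in C_c^\infty(\mathbb R)$ is fixed with $\int_{\mathbb R}Vw\,\dx\neq0$ (possible since $V\not\equiv0$), and $\chi_L$ is a plateau cut‑off equal to $1$ on $[-L,L]$, supported in $[-2L,2L]$, with $\int_{\mathbb R}|\chi_L'|^2\,\dx=O(1/L)$ and $L$ large enough that $\chi_L\equiv1$ on $\mathrm{supp}\,w$. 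Because the regions where $\chi_L'\neq0$ and where $w\neq0$ are disjoint, a direct expansion gives
\[
\int_{\mathbb R}|\psi_\var'|^2\,\dx+\int_{\mathbb R}V\,\psi_\var^2\,\dx=2\var\!\int_{\mathbb R}Vw\,\dx+\var^2\Big(\int_{\mathbb R}|w'|^2\,\dx+\int_{\mathbb R}Vw^2\,\dx\Big)+\int_{\mathbb R}|\chi_L'|^2\,\dx+\int_{\mathbb R}V\chi_L^2\,\dx,
\]
where $\int_{\mathbb R}V\chi_L^2\,\dx\to\int_{\mathbb R}V\,\dx=0$ by dominated convergence. Choosing the sign of $\var$ so that $2\var\int Vw<0$ and then $|\var|$ small makes the $L\to\infty$ limit of the right‑hand side negative at first order in $\var$; fixing such an $\var$ and then taking $L$ large makes the whole expression strictly negative. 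This gives $\tilde a(\psi_\var v_1)<E_1(0)\,\|\psi_\var v_1\|^2$, hence $\inf\sigma(-\Delta_\Omega^D)<E_1(0)$ and $\sigma_{dis}(-\Delta_\Omega^D)\neq\emptyset$. The only delicate point is the order of the two limits: the $O(\var)$ gain must be locked in first, before the cut‑off kinetic energy $O(1/L)$ and the mean‑zero remainder are sent to zero.
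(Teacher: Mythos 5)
Your proof is correct, but it takes a genuinely different route from the paper's. The paper does \emph{not} stay inside the separable subspace: its trial function is $\psi_{n,\delta}=\varphi_n v_1+\delta\,\xi(x)\,y_1 v_1(y)$, a transverse perturbation of the Ansatz $\varphi_n v_1$. Since the leading term tends to $\int_\mathbb{R} V\,\dx=0$, everything there hinges on the cross term, which after an integration by parts (this is where $f''\in L_{loc}^1(\mathbb R)$ enters in the paper) equals $\int_\mathbb{R}\xi\left(-f''/2+\tilde A\,(f'^2-\beta_1^2)\right)\dx$ with $\tilde A=\int_S y_1(\partial_{y_1}v_1)^2\,\dy$; the paper then shows this functional cannot vanish for every $\xi$, because otherwise $\tau_1=f'-\beta_1$ would solve a Riccati-type ODE whose solutions are all incompatible with the hypotheses. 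You instead remain in the subspace $\{\psi(x)v_1(y)\}$, reduce to the classical one-dimensional borderline fact that $-{\mathrm d}^2/{\mathrm d}x^2+V$ with $V\in L^1(\mathbb R)$, $\int_\mathbb{R} V\,\dx=0$, $V\not\equiv0$ has negative spectrum, and prove that fact by the two-scale trial function $\chi_L(1+\var w)$, with the limits taken in the correct order (fix $w$ and $\var$ first, then send $L\to\infty$); your expansion and the dominated-convergence step are sound. Your use of $f''\in L_{loc}^1(\mathbb R)$ is also different: only to make $f'$ continuous so that ``$f'$ not constant'' forces $V\not\equiv0$. What your route buys: it is more elementary (no ODE analysis, no transverse perturbation) and proves more — it handles $\int_\mathbb{R}V\,\dx\le0$, $V\not\equiv0$ uniformly, hence Theorems \ref{theomainimpdis} and \ref{theomaifatend} at once, and it extends verbatim to non-planar curves ($g'\neq0$) whenever $V\not\equiv0$ can be verified. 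What the paper's route buys: it isolates the geometric mechanism (the bending term $f''$) and runs parallel to the two-dimensional argument of \cite{david}.

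Two small points you should make explicit, though neither is a real gap. First, your deduction ``$V\equiv0\Rightarrow f'(x)^2=\beta_1^2$'' silently uses $A>0$; this is true but needs a line: if $\partial_{y_1}v_1\equiv0$, then extending $v_1\in H_0^1(S)$ by zero to $\mathbb R^2$ shows $v_1$ would be a compactly supported function of $y_2$ alone, hence $v_1\equiv0$, a contradiction. Second, ``$f''\in L_{loc}^1(\mathbb R)\Rightarrow f'$ continuous'' deserves justification: $f$ is Lipschitz (as $f'\in L^\infty(\mathbb R)$), hence absolutely continuous, so $f(x)=f(0)+\int_0^x f'\,\dt$; replacing $f'$ in the integrand by its locally absolutely continuous representative $h$ and applying the fundamental theorem of calculus gives $f'=h$ everywhere, not merely almost everywhere.
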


The proof of this result is also presented in Section \ref{sectiondisspec}.
Note that the condition $g'=0$ implies that the reference curve (\ref{refcurveint}) belongs to a plane.
A similar result can be found if $f'=0$, $g'(x)^2-\beta_2^2 \in L^1(\mathbb R)$, $g''(x) \in L_{loc}^1(\mathbb R)$,
$\int_\mathbb R (g'(x)^2-\beta_2^2) \dx = 0$ and $g'$ is not constant.

Now, we are going to show that it is possible to find additional information
about $\sigma_{dis}(-\Delta_\Omega^D)$ provided that $\Omega$ is thin enough. 
For that, we add a small parameter multiplying the points of the cross-section $S$. More precisely,
for $\varepsilon > 0$ small enough, we consider the 
mapping 
\begin{equation}\label{cahngeofcoordif}
\begin{array}{llll}
{\cal L_\varepsilon} : &   \mathbb R \times S    & \to      & \mathbb R^3 \\
                       & (x, y_1, y_2)  & \mapsto  & r(x) + \varepsilon y_1 e_2 + \varepsilon y_2 e_3
\end{array},
\end{equation}
and we define the thin waveguide
\[\Omega_\varepsilon := {\cal L_\varepsilon}(\mathbb R \times S).\] 
Let $-\Delta_{\Omega_\varepsilon}^D$ be the Dirichlet Laplacian operator
in $\Omega_\varepsilon$, i.e., the self-adjoint operator associated with 
\begin{equation}\label{quadinform}
a_\varepsilon(\varphi) : = 
\int_{\Omega_\varepsilon}  |\nabla \varphi|^2 \ds, \quad \dom a_\varepsilon = H_0^1(\Omega_\varepsilon).
\end{equation}
For simplicity, we denote $-\Delta_\varepsilon^D: = -\Delta_{\Omega_\varepsilon}^D$. 
In this case, we have

\begin{Theorem}\label{theothinnumb}
Suppose that the conditions in (\ref{condderivative}) hold and $V(x) \in L^1(\mathbb R)$.
If $V(x)$ assumes a negative value, then
\[\sigma_{dis}(-\Delta_\varepsilon^D) \neq \emptyset,\]
for all $\varepsilon > 0$  small enough. Furthermore,
given $n \in \mathbb N$, there exists $\varepsilon_n > 0$ so that 
the spectrum of $-\Delta_{\varepsilon_n}^D$ contains
at least $n$ discrete eigenvalues, counting multiplicity.
\end{Theorem}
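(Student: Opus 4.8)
The plan is to straighten the thin waveguide, reduce the problem to a one-dimensional Schr\"odinger operator with a large coupling constant proportional to $\varepsilon^{-2}$, and then count eigenvalues below the essential spectrum by the min-max principle.

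First I would transform $-\Delta_\varepsilon^D$ to the fixed tube $\R\times S$ through the diffeomorphism ${\cal L}_\varepsilon$ together with the unitary map $U\varphi:=\varepsilon\,(\varphi\circ{\cal L}_\varepsilon)$ from $L^2(\Omega_\varepsilon)$ onto $L^2(\R\times S)$ (the Jacobian of ${\cal L}_\varepsilon$ is the constant $\varepsilon^2$, so $U$ is unitary). A direct computation shows that $-\Delta_\varepsilon^D$ is unitarily equivalent to the operator $\hat H_\varepsilon$ associated with
\[
\hat a_\varepsilon(u)=\int_{\R\times S}\Big[(\partial_x u)^2-\tfrac{2f'}{\varepsilon}\partial_x u\,\partial_{y_1}u-\tfrac{2g'}{\varepsilon}\partial_x u\,\partial_{y_2}u+\tfrac1{\varepsilon^2}\,q(u)\Big]\dx\,\dy,
\]
with form domain $H^1_0(\R\times S)$, where $q(u)=(f'^2+1)(\partial_{y_1}u)^2+2f'g'\,\partial_{y_1}u\,\partial_{y_2}u+(g'^2+1)(\partial_{y_2}u)^2$. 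At frozen $x$ the transverse part $\varepsilon^{-2}q$ is $\varepsilon^{-2}$ times the form of $H_{f'(x),g'(x)}(0)$; applying Theorem \ref{sigmadiscintr} to the cross-section $\varepsilon S$ and rescaling yields $\inf\sigma_{ess}(-\Delta_\varepsilon^D)=\Sigma_\varepsilon:=E_1(0)/\varepsilon^2$.

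Next I would insert separated trial states $u=\phi(x)\,v_1(y)$, with $v_1$ the normalized, $x$-independent first eigenfunction of $H_{\beta_1,\beta_2}(0)$ and $\phi\in H^1(\R)$. The two $O(1/\varepsilon)$ cross terms vanish because $\int_S v_1\,\partial_{y_i}v_1\,\dy=\tfrac12\int_S\partial_{y_i}(v_1^2)\,\dy=0$ by the Dirichlet condition, and the transverse integral collapses to $Q(x):=A(f'^2+1)+2Bf'g'+C(g'^2+1)$. Since $E_1(0)=A(\beta_1^2+1)+2B\beta_1\beta_2+C(\beta_2^2+1)$, the difference $Q-E_1(0)$ differs from $V$ only by the constant $2B\beta_1\beta_2$, which is the limit of $V$ at infinity; as $V\in L^1(\R)$ this limit vanishes, so $Q(x)-E_1(0)=V(x)$ and one obtains the exact identity
\[
\hat a_\varepsilon(\phi v_1)-\Sigma_\varepsilon\|\phi v_1\|^2=\int_{\R}\Big[\phi'(x)^2+\tfrac1{\varepsilon^2}V(x)\,\phi(x)^2\Big]\dx .
\]
Thus, restricted to $\{\phi v_1:\phi\in H^1(\R)\}$, the shifted form of $\hat H_\varepsilon$ is precisely the form of the one-dimensional operator $-\mathrm{d}^2/\mathrm{d}x^2+\varepsilon^{-2}V$.

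Finally I would count. Because $V$ is negative on a set of positive measure, there are $c>0$ and a region on which $V\le-c$; choose $\phi_1,\dots,\phi_n\in H^1(\R)$ with pairwise disjoint supports inside that region. By the displayed identity, the fact that $\phi_jv_1\in H_0^1(\R\times S)$, and disjointness of supports, any $u=\sum_j a_j\phi_j v_1$ satisfies $\hat a_\varepsilon(u)-\Sigma_\varepsilon\|u\|^2=\sum_j a_j^2\big(\|\phi_j'\|^2+\varepsilon^{-2}\int_\R V\phi_j^2\,\dx\big)\le\sum_j a_j^2\big(\|\phi_j'\|^2-\varepsilon^{-2}c\|\phi_j\|^2\big)<0$ once $\varepsilon<\varepsilon_n$ for a suitable $\varepsilon_n>0$. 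Hence this $n$-dimensional space is strictly negative for $\hat H_\varepsilon-\Sigma_\varepsilon$, and the min-max principle gives at least $n$ eigenvalues of $\hat H_\varepsilon$ below $\Sigma_\varepsilon=\inf\sigma_{ess}$, i.e. at least $n$ discrete eigenvalues of $-\Delta_\varepsilon^D$ counting multiplicity; the case $n=1$ gives $\sigma_{dis}(-\Delta_\varepsilon^D)\neq\emptyset$ for all small $\varepsilon$. The main obstacle is Steps 1--2: carrying out the change of variables and verifying that the transverse reduction is \emph{exact} (vanishing of the $O(1/\varepsilon)$ cross terms and the identification $Q-E_1(0)=V$ forced by $V\in L^1$), since the eigenvalue count is then a routine large-coupling min-max argument; a secondary point is to deduce, from ``$V$ assumes a negative value,'' that $V\le-c$ on a set large enough to host the $n$ disjoint bumps.
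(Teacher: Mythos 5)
Your proposal is correct and follows essentially the same route as the paper's proof in Section \ref{sectiondisspec}: pass to the straightened form $b_\varepsilon$ on $\Lambda$, test the shifted form $b_\varepsilon - (E_1(0)/\varepsilon^2)\|\cdot\|^2$ on $n$ disjointly supported trial functions of the type $\phi_j(x)v_1(y)$ concentrated where $V<0$, and conclude by the min-max principle (the paper cites Lemma 4.5.4 of \cite{livrodavies}) combined with Theorem \ref{sigmadiscintr} for the location of the essential spectrum. Your additional verification that the reduced transverse term really equals $V$ (using $V\in L^1(\mathbb R)$ to force $2B\beta_1\beta_2=0$) is a detail the paper glosses over, and it strengthens rather than alters the argument.
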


In particular, Theorem  \ref{theothinnumb} ensures that the number
of discrete eigenvalues can be arbitrarily large since the waveguide is thin enough. Its proof 
is presented in Section \ref{sectiondisspec}. In this introduction, we give an alternative proof for the
first statement of the theorem.

At first,  we remember some spectral properties of a self-adjoit operator.
Let $T$ be a self-adjoint operator that is bounded from below and denote by $t(\psi)$ its associated quadratic form.
By Min-Max Principle,
\begin{equation}\label{minmaxpint}
\lambda_1(T): = \inf\{t(\psi)/\|\psi\|^2: 0 \neq \psi \in \dom t\}
\end{equation}
is either a discrete eigenvalue 
or the bottom of the essential spectrum of $T$.
In the conditions of Theorem \ref{theothinnumb}, one has  $\sigma_{ess}(-\Delta_\varepsilon^D) = [E_1(0)/\varepsilon^2, \infty)$.
Thus,
the strategy is to study the quantity $\lambda_1(-\Delta_\var^D)$ in order to
discuss the existence of discrete eigenvalues for $-\Delta_\varepsilon^D$ provided that
$\varepsilon > 0$ is small enough. This will be done using some estimates for the quadratic form $a_\varepsilon(\varphi)$.

Let $\Lambda:= \mathbb R \times S$.
According to the change of coordinates described in Section \ref{sectionchange}, $a_\varepsilon(\varphi)$ 
can be identified with
\begin{equation}\label{bvarbinm}
b_\varepsilon(\psi) := 
\int_\Lambda \left( \left|\psi' - \frac{f'(x)}{\varepsilon} \frac{\partial \psi}{\partial y_1} 
- \frac{g'(x)}{\varepsilon} \frac{\partial \psi}{\partial y_2} \right|^2 
+ \frac{|\nabla_y \psi|^2}{\varepsilon^2}\right) \dx \dy,
\end{equation}
$\dom b_\varepsilon = H_0^1(\Lambda)$.
Recall $y=(y_1,y_2)$ that denotes a point of $S$. Furthermore, 
$\psi':= \partial \psi / \partial x$,   and $\nabla_y \psi:= (\partial_{y_1} \psi, \partial_{y_2} \psi)$.

Consider the closed subspace ${\cal W} := \{ w v_1 : w \in  L^2(\mathbb R)\}$ of the Hilbert space $L^2(\Lambda)$.
Define the one-dimensional quadratic form
\begin{align*}
s_\varepsilon(w) & : =  b_\varepsilon (w v_1) - \frac{E_1(0)}{\varepsilon^2} \|wv_1\|^2\\
& =  
\int_\mathbb R \left( |w'|^2 + \left(A \frac{(f'(x)^2-\beta_1^2)}{\varepsilon^2} |w|^2 + 2 B \frac{f'(x)g'(x)}{\varepsilon^2} 
+ C \frac{(g'(x)^2-\beta_2^2)}{\varepsilon^2}\right) |w|^2 \right) \dx,\\
& = 
\int_\mathbb R \left( |w'|^2 + \frac{V(x)}{\varepsilon^2} |w|^2 \right) \dx,
\end{align*}
$\dom s_\varepsilon:=H^1(\mathbb R)$.
Note that $s_\varepsilon(w)$ is the quadratic form $b_\varepsilon(\psi) - (E_1(0)/\varepsilon^2)\|\psi\|^2$ restricted 
to the subspace
$H_0^1(\Lambda) \cap {\cal W}$ which is identified with $H^1(\mathbb R)$.
The self-adjoint operator associated with $s_\varepsilon(w)$ is 
\[w \mapsto \left(-\Delta_\mathbb R + \frac{V(x)}{\varepsilon^2} \Id\right)w, \quad w \in H^2(\mathbb R),\]
where
$-\Delta_\mathbb R$ denotes the one-dimensional Laplacian operator on $\mathbb R$ and $\Id$ is the identity operator on $L^2(\mathbb R)$.

The next step is to compare the values $\lambda_1(-\Delta_\varepsilon^D)$ and
$\lambda_1(-\Delta_\mathbb R + (V(x)/\varepsilon^2) \Id)$. 
Since ${\cal W} \subset H_0^1(\Lambda)$, by (\ref{minmaxpint}), one has 
\begin{equation}\label{appxeq01}
\lambda_1(-\Delta_\varepsilon^D) - \frac{E_1(0)}{\varepsilon^2} \leq 
\lambda_1\left(-\Delta_\mathbb R + \frac{V(x)}{\varepsilon^2} \Id\right).
\end{equation}
However, by Theorem \ref{theoappx} of Appendix \ref{appendix01}, 
\begin{equation}\label{appxeq02}
\lambda_1\left(-\Delta_\mathbb R + \frac{V(x)}{\varepsilon^2} \Id \right) =  \frac{1}{\varepsilon^2}\inf_{x \in \mathbb R}\{V(x)\} 
+ o(\varepsilon^{-2}),
\end{equation}
for all $\varepsilon > 0$ small enough. 
As a consequence of (\ref{appxeq01}) and (\ref{appxeq02}), if $V(x)$ assumes a negative value, then
$\lambda_1(-\Delta_\varepsilon^D) < E_1(0)/\varepsilon^2$, i.e, $\sigma_{dis}(-\Delta_\varepsilon^D) \neq \emptyset$,
for all $\varepsilon > 0$ small enough.

This work is organized as follows.
In Section \ref{sectionchange} we describe the usual changes of coordinates to work in the Hilbert space
$L^2(\Lambda)$ with the usual metric. 
In Section \ref{sectionessspec} we study the essential spectrum of $-\Delta_\Omega^D$ as well as
the proof of Theorem \ref{sigmadiscintr}. Section \ref{sectiondisspec} is dedicated to study 
the discrete spectrum of $-\Delta_\Omega^D$. In that section, we present the proofs of Theorems
\ref {theomainimpdis}, \ref{theomaifatend} and \ref{theothinnumb}.
In Appendices \ref{appendix001} and \ref{appendix01} we present useful results used in this work.

\section{Change of coordinates}\label{sectionchange}

Recall the mapping ${\cal L}$ and the  quadratic form $a(\varphi)$ given 
by (\ref{cahngeofcoordif1}) and  (\ref{quadinform1}), respectively, in the Introduction.
Several arguments to prove the results of this work are 
based on the study of the quadratic form $a(\varphi)$ which acts in $L^2(\Omega)$; 
again, recall that $\Omega = {\cal L}(\Lambda)$ where $\Lambda = \mathbb R \times S$. 
Then, in this section we perform a change of coordinates such that $a(\varphi)$ starts
to act in the Hilbert space $L^2(\Lambda)$ instead of $L^2(\Omega)$.

At first, note that  $\Omega$ can be identified with the Riemannian manifold
$(\Lambda, G)$, where $G=(G_{ij})$ is the metric induced by ${\cal L}$, i.e.,
\[G_{ij} = \langle {\cal G}_i, {\cal G}_j \rangle = G_{ji}, \quad i,j=1,2,\]
where
\[{\cal G}_1 = \frac{\partial {\cal L}}{\partial x}, \quad 
{\cal G}_2 = \frac{\partial {\cal L}}{\partial y_1}, \quad {\cal G}_3 = \frac{\partial {\cal L}}{\partial y_2} .\]
More precisely,
\[G = \nabla {\cal L} \cdot (\nabla {\cal L})^t = \left(
\begin{array}{ccc}
1 +  (f'(x))^2 + (g'(x))^2 &  f'(x) &  g'(x) \\
f'(x)                                   & 1       &  0 \\
g'(x)                                   &    0                & 1
\end{array} \right), \quad \det G = 1.\]
Since $\Omega$ is homeomorphic to the straight waveguide $\Lambda$, 
we consider the unitary operator
\begin{equation}\label{unituvar}
\begin{array}{llll}
{\cal U}: &   L^2(\Omega)  &  \to &  L^2(\Lambda) \\
        &    \psi   &  \mapsto  &        \psi \circ {\cal L}
\end{array},
\end{equation}
and, we define
\begin{align}\label{compaquadravar}
b_{f',g'}(\psi) & :=  a({\cal U}^{-1} \psi)  
= \int_\Lambda \langle \nabla \psi, G^{-1} \nabla \psi \rangle \sqrt{{\rm det}\, G} \, \dx \dy  \nonumber \\ 
& =  \int_\Lambda \left(\left|\psi' - f'(x) \frac{\partial \psi}{\partial y_1} - 
g'(x) \frac{\partial \psi}{\partial y_2} \right|^2 +  |\nabla_y \psi|^2 \right) \dx \dy,
\end{align}
$\dom b_{f',g'} := {\cal U}(H_0^1(\Omega)) = H_0^1(\Lambda)$; in fact, $f',g' \in L^\infty(\mathbb R)$.
Recall  that $y=(y_1, y_2)$ denotes a point of $S$,
$\psi' = \partial \psi / \partial x$,  and $\nabla_y \psi = (\partial_{y_1} \psi, \partial_{y_2} \psi)$.
Denote by $H_{f',g'}$ the self-adjoint operator associated with the quadratic form
$b_{f',g'}(\psi)$. 

\begin{Remark}{\rm 
We can perform a similar change of coordinates for the quadratic form $a_\varepsilon(\varphi)$ given 
(\ref{quadinform}) in the
Introduction. Recall the mapping ${\cal L}_\varepsilon$ and the quadratic form $b_\varepsilon(\psi)$ given by 
(\ref{cahngeofcoordif})
and (\ref{bvarbinm}), respectively.
In this case, consider the
unitary transformation ${\cal U}_\varepsilon :  L^2(\Omega_\varepsilon)  \to   L^2(\Lambda)$,  
${\cal U}_\varepsilon \psi := \varepsilon^2  \psi \circ {\cal L}_\varepsilon$. Then, some calculations show that
$b_\varepsilon(\psi) =  a_\varepsilon({\cal U}_\varepsilon^{-1} \psi)$,   
$\dom b_\varepsilon = {\cal U}_\varepsilon(H_0^1(\Omega_\varepsilon)) =H_0^1(\Lambda)$. Furthermore,
if we compare
this quadratic form with that in (\ref{compaquadravar}), we can note that $b_\varepsilon = b_{f'/\varepsilon, g'/\varepsilon}$.}
\end{Remark}

\section{Essential spectrum}\label{sectionessspec}

This section is dedicated to the proof of Theorem \ref{sigmadiscintr}.
We start with the following result.

\begin{Proposition}\label{essentialspectigual}
Suppose that the conditions in (\ref{condderivative}) hold.
Then, $\sigma_{ess}(H_{f',g'})  = \sigma_{ess}(H_{\beta_1, \beta_2})$.
\end{Proposition}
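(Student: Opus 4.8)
The plan is to establish the two inclusions separately by means of singular (Weyl) sequences, working throughout at the level of the quadratic forms rather than the operators. Write $b_{\beta_1,\beta_2}$ for the form obtained from $b_{f',g'}$ by replacing $f',g'$ with the constants $\beta_1,\beta_2$; both forms live on the same domain $H_0^1(\Lambda)$, so they are directly comparable. Setting $D_0:=\partial_x-\beta_1\partial_{y_1}-\beta_2\partial_{y_2}$ and $R:=(f'-\beta_1)\partial_{y_1}+(g'-\beta_2)\partial_{y_2}$, the difference of the forms is
\[
b_{f',g'}(\psi)-b_{\beta_1,\beta_2}(\psi)
=\int_\Lambda\Big(-2\,\mathrm{Re}\big(\overline{D_0\psi}\,R\psi\big)+|R\psi|^2\Big)\dx\dy ,
\]
which is only first order and whose coefficients $f'-\beta_1,\ g'-\beta_2$ tend to $0$ at infinity by (\ref{condderivative}). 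It is important to argue with the forms and not with the operators, because $H_{f',g'}$ in differential form would involve the second derivatives $f'',g''$, which need not exist under the present hypotheses.

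For the inclusion $\sigma_{ess}(H_{\beta_1,\beta_2})\subseteq\sigma_{ess}(H_{f',g'})$ I would exploit the invariance of $b_{\beta_1,\beta_2}$ under translations in $x$. Given $\lambda\in\sigma_{ess}(H_{\beta_1,\beta_2})$, I pick a singular sequence $(u_k)$ for $H_{\beta_1,\beta_2}$ at $\lambda$, which may be taken with compact support in $x$, and translate it, $u_k^{(t)}:=u_k(\cdot-t_k,\cdot)$, with $t_k\to\infty$ chosen so that $\mathrm{supp}\,u_k^{(t)}\subset\{x>s_k\}$ and $s_k\to\infty$. Translation invariance keeps $(u_k^{(t)})$ singular for $H_{\beta_1,\beta_2}$, while on its support $|f'-\beta_1|+|g'-\beta_2|\le\delta_k\to0$; bounding the form difference above by $\delta_k$ times the $H^1$-norms then shows that $(u_k^{(t)})$ is singular for $H_{f',g'}$ as well, so $\lambda\in\sigma_{ess}(H_{f',g'})$.

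The converse inclusion $\sigma_{ess}(H_{f',g'})\subseteq\sigma_{ess}(H_{\beta_1,\beta_2})$ is where the real work lies. Starting from a singular sequence $(\psi_n)$ for $H_{f',g'}$ at $\lambda$, normalized with $\psi_n\rightharpoonup0$, I would localize it near infinity. Since $S$ is bounded, the embedding $H^1((-R,R)\times S)\hookrightarrow L^2$ is compact, so $\|\psi_n\|_{L^2(|x|<R)}\to0$ for each fixed $R$, i.e. the $L^2$-mass escapes to infinity. Choosing cutoffs $\eta_n=\eta_n(x)$ equal to $1$ on $\{|x|>R_n+1\}$ and $0$ on $\{|x|<R_n\}$, with $R_n\to\infty$, the plan is to show that $(\eta_n\psi_n)$ is singular for $H_{\beta_1,\beta_2}$: on $\mathrm{supp}\,\eta_n$ the coefficients are within $\delta_n\to0$ of $\beta_1,\beta_2$, which controls the coefficient-swap error, and testing against $\phi$ by using $\chi=\eta_n\phi$ in the singular-sequence relation for $H_{f',g'}$ reduces the comparison to the localization commutators $\int\eta_n'\,\psi_n\,\overline{D_0\phi}$ and $\int\eta_n'\,\overline{\phi}\,D_0\psi_n$, which arise because $\eta_n$ depends only on $x$.

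The delicate point, and the step I expect to be the main obstacle, is that these commutators involve $\psi_n$ and its first derivatives on the transition slab $\{R_n<|x|<R_n+1\}$, while $\|\eta_n'\|_\infty$ does not decay. I would resolve this by selecting $R_n$ inside a unit slab on which $\psi_n$ carries little $H^1$-mass: since $\|\psi_n\|_{H^1(\Lambda)}$ is bounded (the energy bound $b_{f',g'}(\psi_n)\to\lambda$ follows from $\psi_n$ being singular), a pigeonhole argument over the slabs in a growing window produces $R_n\to\infty$ with $\|\psi_n\|_{H^1(R_n<|x|<R_n+1)}\to0$, killing both commutator terms, and a diagonal choice simultaneously ensures $\|\eta_n\psi_n\|\to1$ so that the truncated sequence is genuinely normalized and weakly null. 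I note that the analogous resolvent-difference argument is \emph{not} available here: the perturbation $\int|R\psi|^2$ is not relatively form-compact, because a bounded-energy weakly null sequence may keep $\mathcal{O}(1)$ transverse-derivative mass in a bounded $x$-region by loading a high transverse mode. What rescues the localization is precisely that, for a singular sequence at finite $\lambda$, the spectral gaps of the transverse problem confine $(\psi_n)$ to finitely many transverse modes up to an arbitrarily small remainder, and on that finite-mode part the compact embedding does force the slab contributions to vanish.
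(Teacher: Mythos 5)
Your proposal is correct, and its core mechanism is the same as the paper's: produce a singular (Weyl) sequence at the level of the quadratic forms whose supports live near $x=\pm\infty$, and then absorb the difference $b_{f',g'}-b_{\beta_1,\beta_2}$ using that $\tau_1=f'-\beta_1$ and $\tau_2=g'-\beta_2$ are uniformly small on those supports; your bound on the form difference is exactly the paper's estimate in terms of $\|\tau_i\|_{L^\infty(\mathbb R\setminus(-n,n))}$. The difference is in how the localization is obtained. The paper outsources it entirely: it invokes Lemma \ref{lemmasimilar} (adapted from Lemma 5 of \cite{david}, and not reproved), which asserts that $\lambda\in\sigma_{ess}$ if and only if there is a singular sequence, in the dual-of-form-domain sense, whose supports escape to infinity; with that characterization both inclusions reduce to the same coefficient-swap computation, applied symmetrically. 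You instead build the localization by hand: translation invariance of $b_{\beta_1,\beta_2}$ for the inclusion $\sigma_{ess}(H_{\beta_1,\beta_2})\subseteq\sigma_{ess}(H_{f',g'})$, and, for the converse, the compact embedding $H^1((-R,R)\times S)\hookrightarrow L^2$ to force $L^2$-mass escape, plus cutoffs whose commutator errors are killed by a pigeonhole selection of a unit slab carrying $o(1)$ of the $H^1$-mass. That slab-selection argument is sound (boundedness of $\|\psi_n\|_{H^1}$ follows from the singular-sequence property and the equivalence of the form norm with the $H^1$ norm, since $f',g'\in L^\infty$), and it is essentially the content of the cited lemma, so your proof buys self-containedness at the cost of the extra cutoff analysis, while the paper's is shorter but rests on an unproved imported lemma. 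One remark: your closing sentence about confinement to finitely many transverse modes is not needed — the pigeonhole choice of $R_n$ already disposes of the slab contributions — so it reads as motivation rather than a required step; everything before it stands on its own.
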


The proof of this proposition is discussed in Appendix \ref{appendix001}. 
As a consequence, we are going to study the essential spectrum of the operator
$H_{\beta_1, \beta_2}$ instead of $H_{f',g'}$.

Let ${\cal F}_x: L^2(\Lambda) \to L^2(\Lambda)$ be the partial Fourier transform in the longitudinal variable $x$.
${\cal F}_x$ is a unitary operator and, for functions $\psi \in L^1(\Lambda)$, the explicit expression for this transformation is given by
\[({\cal F}_x \psi)(p,y) = \frac{1}{\sqrt{2 \pi}} \int_\mathbb R e^{-ipx} \psi(x,y) \dx.\]
We consider the operator $\hat{H}_{\beta_1, \beta_2} := {\cal F}_x H_{\beta_1, \beta_2} {\cal F}_x^{-1}$ 
which admits a direct integral decomposition
\[\hat{H}_{\beta_1, \beta_2} = \int_\mathbb R^\oplus H_{\beta_1, \beta_2}(p) \,{\rm d}p,\]
where, for each $p \in \mathbb R$, $H_{\beta_1, \beta_2}(p)$ is the self-adjoint operator associated with
the quadratic form
\[h_{\beta_1, \beta_2}(p)(\psi) = \int_S \left(|ip \psi - \beta_1 \partial_{y_1} \psi - \beta_2 \partial_{y_2} \psi|^2 +
|\nabla_y \psi|^2 \right) \dy, \quad \dom h_{\beta_1, \beta_2}(p) = H_0^1(S).\]
Since $S$ is a bounded domain with $C^2$-boundary, one has 
\[H_{\beta_1, \beta_2}(p) = -(i p  - \beta_1 \partial_{y_1} - \beta_2 \partial_{y_2})^2
-\partial_{y_1}^2 - \partial_{y_2}^2, \quad
\dom H_{\beta_1, \beta_2}(p)= H^2(S) \cap H_0^1(S);\]
if $p=0$, we obtain the operator given by (\ref{fourpp0}) in the Introduction. 

By the compactness of the embedding $H_0^1(S) \hookrightarrow L^2(S)$, each
$H_{\beta_1, \beta_2}(p)$ has purely discrete spectrum. 
Denote by $\{E_n(p)\}_{n \in \mathbb N}$ the
sequence of the eigenvalues of $H_{\beta_1, \beta_2}(p)$ 
and by
$\{v_n(p)\}_{n \in \mathbb N}$ the sequence of the corresponding normalized  eigenfunctions, i.e.,
\[H_{\beta_1, \beta_2}(p) v_n(p) = E_n(p) v_n(p), \quad n \in \mathbb N, \quad p \in \mathbb R.\]
As already done in the Introduction, for simplicity, we denote $v_1 :=v_1(0)$.

Finally,
\begin{equation}\label{eqdecespavsdn}
\sigma(H_{\beta_1, \beta_2}) = \cup_{p \in \mathbb R} \sigma(H_{\beta_1, \beta_2}(p)) =
\cup_{n \in \mathbb N} \{E_n(p): p \in \mathbb R\}.
\end{equation}

\begin{Lemma}\label{lemmanalytic}
Each $E_n(\cdot)$, $n \in \mathbb N$, is a real-analytic function of $p$ and 
\begin{equation}\label{lemmanalyeig}
\displaystyle \lim_{p \to \pm \infty} E_n(p) = \infty.
\end{equation}
\end{Lemma}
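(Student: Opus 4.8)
To prove the real-analyticity I would first expand the square in the definition of $H_{\beta_1, \beta_2}(p)$ to display it as an explicit polynomial in $p$:
\[
H_{\beta_1, \beta_2}(p) = p^2 + 2ip\,(\beta_1 \partial_{y_1} + \beta_2 \partial_{y_2}) - (\beta_1 \partial_{y_1} + \beta_2 \partial_{y_2})^2 - \partial_{y_1}^2 - \partial_{y_2}^2.
\]
The decisive point is that the coefficients of $p^2$, $p^1$ and $p^0$ are fixed operators all acting on the single fixed domain $H^2(S) \cap H_0^1(S)$, and the first- and zeroth-order coefficients are relatively bounded with respect to the self-adjoint operator $H_{\beta_1, \beta_2}(0)$. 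Replacing $p$ by a complex variable $z$, this exhibits $z \mapsto H_{\beta_1, \beta_2}(z)$ as a holomorphic family of type (A) in the sense of Kato. On the real axis each $H_{\beta_1, \beta_2}(p)$ is self-adjoint with compact resolvent, so Rellich's theorem on self-adjoint holomorphic families applies and produces real-analytic eigenvalue branches, which is the first assertion.

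For the limit (\ref{lemmanalyeig}) I would argue through the bottom of the spectrum. By the Min-Max Principle,
\[
E_1(p) = \inf_{0 \neq \psi \in H_0^1(S)} \frac{h_{\beta_1, \beta_2}(p)(\psi)}{\|\psi\|_{L^2(S)}^2},
\]
and since every eigenvalue satisfies $E_n(p) \geq E_1(p)$, it suffices to show $E_1(p) \to \infty$ as $|p| \to \infty$. Suppose this fails: then there exist a constant $M$, a sequence $|p_k| \to \infty$, and normalized $\psi_k \in H_0^1(S)$ with $h_{\beta_1, \beta_2}(p_k)(\psi_k) \leq M$. Both nonnegative contributions to the form are then bounded by $M$; in particular $\|\nabla_y \psi_k\|_{L^2(S)} \leq \sqrt{M}$, whence $\|\beta_1 \partial_{y_1} \psi_k + \beta_2 \partial_{y_2} \psi_k\|_{L^2(S)} \leq (\beta_1^2 + \beta_2^2)^{1/2}\sqrt{M}$, while $\|ip_k \psi_k - \beta_1 \partial_{y_1} \psi_k - \beta_2 \partial_{y_2} \psi_k\|_{L^2(S)} \leq \sqrt{M}$. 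The triangle inequality then bounds $|p_k| = \|ip_k \psi_k\|_{L^2(S)}$ by a constant independent of $k$, contradicting $|p_k| \to \infty$. Hence $E_1(p) \to \infty$, and therefore $E_n(p) \to \infty$ for every fixed $n$.

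The analytic part is the delicate one. The ordering $E_1(p) \leq E_2(p) \leq \cdots$ may disagree with the analytic labeling at eigenvalue crossings, where the functions ordered by magnitude can fail to be smooth; I would therefore phrase the statement in terms of the analytic branches furnished by Rellich's theorem (legitimate precisely because we have a genuine self-adjoint holomorphic family of type (A) in a single real parameter), relabelling by magnitude only afterwards. The simplicity of $E_1(0)$ recorded in the Introduction at least guarantees that the lowest branch is analytic near $p=0$. The growth estimate, by contrast, is elementary once the quadratic form is split into its two nonnegative pieces, so I expect no real difficulty there.
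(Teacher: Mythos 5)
Your proposal is correct. On the analyticity half you take essentially the paper's route: write $H_{\beta_1,\beta_2}(p)$ as a polynomial in $p$ with operator coefficients on the fixed domain $H^2(S)\cap H_0^1(S)$, recognize a self-adjoint holomorphic family of type (A), and invoke Kato--Rellich. The one ingredient you assert rather than prove is the relative boundedness of the first-order coefficient; the paper establishes it explicitly, showing that $2ip(\beta_1\partial_{y_1}+\beta_2\partial_{y_2})$ is $H_{\beta_1,\beta_2}(0)$-bounded with relative bound zero via a resolvent estimate (using $\|R_z\|\to 0$ as $\mathrm{Im}\,z\to\infty$), and a complete write-up should include this step, standard though it is. Your caveat about eigenvalue crossings --- that Rellich's theorem furnishes analytic branches, which agree with the magnitude-ordered $E_n(p)$ only up to relabelling --- is a point of care the paper glosses over, and it is the correct reading of Kato's Theorem 3.9.

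On the divergence half your argument is genuinely different in execution, and as written it is tighter than the paper's. The paper expands the form, applies a weighted Young inequality to get $h_{\beta_1,\beta_2}(p)(\psi)\ \ge\ \tfrac{\delta}{1+\delta}\,p^2\|\psi\|^2-\delta\,\|\beta_1\partial_{y_1}\psi+\beta_2\partial_{y_2}\psi\|^2$, and then invokes a ``pointwise limit'' which it applies to $\psi=v_n(p)$; since $v_n(p)$ varies with $p$, a statement pointwise in $\psi$ does not suffice --- one must retain the discarded $\|\nabla_y\psi\|^2$ term and take $\delta$ small enough to absorb the negative term, which yields the uniform bound $E_1(p)\ge \tfrac{\delta}{1+\delta}p^2$. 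Your contradiction argument via the triangle inequality, $|p_k|\le \|ip_k\psi_k-a_k\|+\|a_k\|$ with $a_k=\beta_1\partial_{y_1}\psi_k+\beta_2\partial_{y_2}\psi_k$, performs exactly this absorption and gives, in quantitative form, $E_1(p)\ge p^2/\bigl(1+\sqrt{\beta_1^2+\beta_2^2}\,\bigr)^2$; combined with $E_n(p)\ge E_1(p)$ this closes the argument with no gap. In short: same mechanism for analyticity, and for the limit an elementary triangle-inequality argument that both replaces and repairs the paper's Young-inequality passage.
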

\begin{proof}
For each $p \in \mathbb R$, the domain $\dom H_{\beta_1, \beta_2}(p)$ coincides with 
$\dom H_{\beta_1, \beta_2}(0)$, and we write 
\[H_{\beta_1, \beta_2}(p) = H_{\beta_1, \beta_2}(0) +  p^2 + 2ip(\beta_1 \partial_{y_1} + \beta_2 \partial_{y_2}).\]
Now, take $z \in \mathbb C$ with ${\rm img}\,z \neq 0$, and denote $R_z := (H_{\beta_1, \beta_2}(0) - z \Id)^{-1}$.
We have the estimate
\begin{align*}
\|2ip(\beta_1 \partial_{y_1} \psi + \beta_2 \partial_{y_2} \psi)\|^2
&  \leq
4p^2 \, \langle  \psi, H_{\beta_1, \beta_2}(0) \psi \rangle  \\
& \leq 
4 p^2 \, \langle R_z(H_{\beta_1, \beta_2}(0) - z \Id) \psi, H_{\beta_1, \beta_2}(0) \psi \rangle  \\
&\leq 
4 p^2 \, \langle R_z H_{\beta_1, \beta_2}(0) \psi, H_{\beta_1, \beta_2}(0) \psi \rangle  
+  |z| \langle \psi, R_{\overline{z} } H_{\beta_1, \beta_2}(0) \psi \rangle  \\
& \leq 
4 p^2 \, \|R_z H_{\beta_1, \beta_2}(0) \psi\| \|H_{\beta_1, \beta_2}(0) \psi\| +
|z| \langle \psi, (\Id+\overline{z} R_{z}) \psi \rangle \\
& \leq 
4 p^2 \, \left[ \|R_z\|  \|H_{\beta_1, \beta_2}(0) \psi\| + 
  \left( |z| +|z|^2 \|R_{z} \| \right) 
\|\psi\|^2 \right],
\end{align*}
for all  $\psi \in \dom H_{\beta_1, \beta_2}(0)$ and all $p \in \mathbb R$.
Since $\|R_z\| \to 0$, as ${\rm img}\,z \to \infty$, the operator
$2ip(\beta_1 \partial_{y_1} + \beta_2 \partial_{y_2})$ is $H_{\beta_1, \beta_2}(0)$-bounded with
zero relative bound. Consequently, since $p^2$ is clearly analytic, $\{H_{\beta_1, \beta_2}(p): p \in \mathbb R\}$ is a type A analytic family. 
As a consequence, Theorem 3.9 of \cite{kato} implies that all the $E_n(\cdot)$ are real-analytic functions of $p$.

Now, fix $\delta > 0$. For each $\psi \in \dom h_{\beta_1, \beta_2}(p)$, one has
\[(1+\delta) |\beta_1 \partial_{y_1} \psi + \beta_2 \partial_{y_2} \psi|^2 + \frac{p^2}{1+\delta} |\psi|^2
\geq  2\,{\rm Re}\left(ip \psi (\beta_1 \partial_{y_1} \overline{\psi} + \beta_2 \partial_{y_2} \overline{\psi})\right).\]
Consequently,
\begin{align*}
h_{\beta_1, \beta_2}(p)(\psi) 
& =
\int_S \left(p^2|\psi|^2 -
2 \,{\rm Re} (ip \psi 
(\beta_1 \partial_{y_1} \overline{\psi} + \beta_2 \partial_{y_2} \overline{\psi})) + 
|\beta_1 \partial_{y_1} \psi + \beta_2 \partial_{y_2} \psi|^2 \right) \dy \\
& \geq
\int_S \left(p^2 \frac{\delta}{1+\delta}|\psi|^2 - \delta  
|\beta_1 \partial_{y_1} \psi + \beta_2 \partial_{y_2} \psi|^2 \right) \dy.
\end{align*}
Thus, we obtain the pointwise limit $h_{\beta_1, \beta_2}(p)(\psi) \to \infty$, as $p\to \pm \infty$.
In particular, for each $n \in \mathbb N$, $h_{\beta_1, \beta_2}(p)(v_n(p)) = E_n(p) \to \infty$, as $p\to \pm \infty$. Then,
(\ref{lemmanalyeig}) is proven.
\end{proof}

\begin{Remark}{\rm
Lemma \ref{lemmanalytic} ensures that the functions $E_n(p)$ are 
nonconstant and analytic in $\mathbb R$.
Consequently, the spectrum of the operator $H_{\beta_1, \beta_2}$ is purely absolutely continuous;
see Theorem XIII.86 of \cite{reed}.}
\end{Remark}


\begin{Proposition}\label{lemmaessspecve12}
One has $\sigma(H_{\beta_1, \beta_2})=  [E_1(0), \infty)$.
\end{Proposition}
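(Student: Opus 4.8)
The plan is to read the spectrum off the band functions $E_n(\cdot)$ and to show that the whole of $\sigma(H_{\beta_1,\beta_2})$ is governed by the lowest band. By (\ref{eqdecespavsdn}) one has $\sigma(H_{\beta_1,\beta_2})=\bigcup_{n\in\mathbb N}\{E_n(p):p\in\mathbb R\}$. By Lemma \ref{lemmanalytic} each $E_n$ is continuous on $\mathbb R$ and satisfies $E_n(p)\to\infty$ as $p\to\pm\infty$; since the eigenvalues of every fiber are ordered increasingly, $E_n(p)\ge E_1(p)$ for all $n$. Hence each band attains its minimum and, by the intermediate value theorem, its range is the closed half-line $[\min_{p}E_n(p),\infty)$, and every band lies above the first one. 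Thus the union collapses to the range of $E_1$, and it only remains to prove that $\min_{p\in\mathbb R}E_1(p)=E_1(0)$; this will give $\sigma(H_{\beta_1,\beta_2})=[E_1(0),\infty)$.

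Since $E_1(0)$ is attained at $p=0$, the content of the claim is the lower bound $E_1(p)\ge E_1(0)$ for every $p$. By the Min-Max Principle this is equivalent to the quadratic-form inequality $h_{\beta_1,\beta_2}(p)(\psi)\ge E_1(0)\,\|\psi\|^2$ for all $\psi\in H_0^1(S)$. Writing $D:=\beta_1\partial_{y_1}+\beta_2\partial_{y_2}$ and using that $D$ is skew-symmetric on $H_0^1(S)$ (integration by parts with the Dirichlet condition), the cross term is purely linear in $p$, so that
\[
h_{\beta_1,\beta_2}(p)(\psi)=h_{\beta_1,\beta_2}(0)(\psi)+p^2\|\psi\|^2-2p\,\gamma(\psi),\qquad \gamma(\psi):=-i\langle\psi,D\psi\rangle\in\mathbb R .
\]
For a fixed $\psi$, requiring this to be $\ge E_1(0)\|\psi\|^2$ for every $p$ amounts, after minimizing the quadratic in $p$, to the single $p$-free inequality
\[
\gamma(\psi)^2\le \|\psi\|^2\left(h_{\beta_1,\beta_2}(0)(\psi)-E_1(0)\,\|\psi\|^2\right),\qquad \psi\in H_0^1(S).
\]

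I expect this last inequality to be the main obstacle, and I would establish it by a ground-state substitution. Note that $H_{\beta_1,\beta_2}(0)=-\mathrm{div}_y(M\nabla_y\,\cdot\,)$ with the constant positive-definite matrix $M=\mathrm{Id}+\beta\beta^{\mathrm t}$, $\beta=(\beta_1,\beta_2)$, and that the ground state $v_1>0$. Substituting $\psi=v_1\phi$ and using $H_{\beta_1,\beta_2}(0)v_1=E_1(0)v_1$, the standard ground-state (Jacobi) identity would give
\[
h_{\beta_1,\beta_2}(0)(\psi)-E_1(0)\|\psi\|^2=\int_S v_1^2\,\overline{\nabla_y\phi}\cdot M\nabla_y\phi\,\dy,\qquad
\gamma(\psi)=\int_S v_1^2\,\mathrm{Im}\!\left(\bar\phi\,(\beta\cdot\nabla_y\phi)\right)\dy .
\]
A weighted Cauchy-Schwarz inequality (with weight $v_1^2$) then bounds $\gamma(\psi)^2$ by $\left(\int_S v_1^2|\phi|^2\dy\right)\left(\int_S v_1^2|\beta\cdot\nabla_y\phi|^2\dy\right)=\|\psi\|^2\int_S v_1^2|\beta\cdot\nabla_y\phi|^2\dy$, while the algebraic identity $\overline{\nabla_y\phi}\cdot M\nabla_y\phi=|\nabla_y\phi|^2+|\beta\cdot\nabla_y\phi|^2\ge|\beta\cdot\nabla_y\phi|^2$ shows that the last factor is dominated by $h_{\beta_1,\beta_2}(0)(\psi)-E_1(0)\|\psi\|^2$. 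Combining the two gives exactly the required $p$-free inequality.

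Finally I would assemble the pieces: the lower bound yields $\min_p E_1(p)=E_1(0)$, so the range of the first band is $[E_1(0),\infty)$, and since every $E_n(p)\ge E_1(p)\ge E_1(0)$ the union of all bands is contained in $[E_1(0),\infty)$; therefore $\sigma(H_{\beta_1,\beta_2})=[E_1(0),\infty)$. The only delicate point is the weighted inequality above; the ordering of the bands, their continuity, and the surjectivity onto $[E_1(0),\infty)$ are routine consequences of Lemma \ref{lemmanalytic}.
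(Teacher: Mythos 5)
Your proposal is correct and follows essentially the same route as the paper: the inclusion $[E_1(0),\infty)\subset\sigma(H_{\beta_1,\beta_2})$ comes from the fiber decomposition (\ref{eqdecespavsdn}) together with Lemma \ref{lemmanalytic}, and the lower bound $E_1(p)\ge E_1(0)$ comes from the ground-state substitution $\psi=v_1\phi$. The only difference is organizational: the paper keeps the $ip$-term inside the substitution identity, so that $h_{\beta_1,\beta_2}(p)(\psi)-E_1(0)\|\psi\|^2$ appears directly as a manifestly nonnegative weighted integral, whereas you first minimize the quadratic in $p$ and then verify the resulting discriminant inequality by a weighted Cauchy--Schwarz argument --- two algebraically equivalent ways of completing the same square.
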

\begin{proof}
Due to decomposition in (\ref{eqdecespavsdn}) and 
Lemma \ref{lemmanalytic}, we have $[E_1(0), \infty) \subset \sigma(H_{\beta_1, \beta_2})$. It remains to show that
$(-\infty, E_1(0)) \cap \sigma(H_{\beta_1, \beta_2}) = \emptyset$. Since $C_0^\infty(S)$ is a core of $h_{\beta_1, \beta_2}(p)$, by Min-Max Principle,
\[\inf \sigma (H_{\beta_1, \beta_2}(p)) = \inf_{0\neq \psi \in C_0^\infty(S)} \left\{
\int_S \left(|ip \psi - \beta_1 \partial_{y_1} \psi - \beta_2 \partial_{y_2} \psi|^2 +
|\nabla_y \psi|^2 \right) \dy / \int_S |\psi|^2 \dy \right\}.\]
Now, we perform the change of variable $\psi = \varphi v_1$, $\varphi \in C_0^\infty(S)$;
note that $v_1^{-1} \psi \in C_0^\infty(S)$ if $\psi \in C_0^\infty(S)$.
Some straightforward computations show that
\begin{align*}
& 
\int_S \left(|ip \psi - \beta_1 \partial_{y_1} \psi - \beta_2 \partial_{y_2} \psi|^2 +
|\nabla_y \psi|^2 \right) \dy \\
& = 
\int_S \left(|ip \varphi - \beta_1 \partial_{y_1} \varphi - \beta_2 \partial_{y_2} \varphi|^2 +
|\nabla_y \varphi|^2 \right) |v_1|^2 \dy \\
& +
\int \left( -(1+\beta_1^2) \frac{\partial^2 v_1}{\partial y_1^2} -(1+\beta_2^2) \frac{\partial^2 v_1}{\partial y_2^2} 
- 2 \beta_1 \beta_2 \frac{\partial^2 v_1}{\partial y_1y_2} \right) v_1 |\varphi|^2 \dy\\
& = 
\int_S \left(|ip \varphi - \beta_1 \partial_{y_1} \varphi - \beta_2 \partial_{y_2} \varphi|^2 +
|\nabla_y \varphi|^2 \right) |v_1|^2 \dy \\
& +
E_1(0) \int_S |\psi|^2 \dy.
\end{align*}
Then, $E_1(0) \leq \inf \sigma(H_{\beta_1, \beta_2}(p))$, for all $p \in \mathbb R$. Thus,
$(-\infty, E_1(0)) \cap \sigma(H_{\beta_1, \beta_2}) = \emptyset$.
\end{proof}

\vspace{0.3cm}
\noindent
{\bf Proof of Theorem \ref{sigmadiscintr}:}
It just to apply 
Propositions \ref{essentialspectigual} and  \ref{lemmaessspecve12}.

\begin{Remark}
{\rm If $\beta_1\beta_2=0$, we can give an alternative proof to find $\sigma_{ess}(-\Delta_\Omega^D)$. In fact, suppose $\beta_2 =0$. Define
\[\gamma:= \frac{p \beta_1}{(1+\beta_1^2)},\]
and consider the multiplication operators $e^{i \gamma y_1}$ and $e^{-i \gamma y_1}$.
Then, for each $p \in \mathbb R$,
\begin{eqnarray*}
-(i p  - \beta_1 \partial_{y_1} )^2  - \partial_{y_1}^2 - \partial_{y_2}^2
& = &
-(1+ \beta_1^2) \left( \partial_{y_1} - \frac{i p \beta_1}{1+\beta_1^2} \right)^2 
- \partial_{y_2}^2  + 
\frac{p^2}{1+\beta_1^2} \\
& = &
e^{i \gamma y_1} \left[ -(1+\beta_1^2) \partial_{y_1}^2 - \partial_{y_2}^2 +
\frac{p^2}{1+\beta_1^2} \right] e^{-i \gamma y_1}.
\end{eqnarray*}
Consequently,
\[\sigma(H_{\beta_1, 0}(p)) = \left\{ E_n(0) + \frac{p^2}{1+\beta_1^2}\right\}_{n=1}^\infty, \quad \hbox{and} \quad
\sigma(H_{\beta_1, 0}) = [E_1(0), \infty).\]}
\end{Remark}

\section{Discrete spectrum}\label{sectiondisspec}

This section is dedicated to prove Theorens \ref{theomainimpdis}, \ref{theomaifatend}
and \ref{theothinnumb} stated in the Introduction. For simplicity, 
we denote $b:=b_{f',g'}$.

\vspace{0.3cm}
\noindent
{\bf Proof of Theorem \ref{theomainimpdis}:}
For each  $n \in \mathbb N$, consider a linear function $\varphi_n : \mathbb R \to \mathbb R$ in $C^\infty(\mathbb R)$ satisfying the following conditions:
\[|| \varphi_n \|_\infty \leq 1, \quad \varphi_n \equiv 1 \,\, \hbox{on} \,\, [-n,n],  \quad
\varphi_n \equiv 0 \,\, \hbox{on} \,\, \mathbb R \backslash (-2n,2n).\]
Define
\[\psi_n(x,y) := \varphi_n(x) v_1(y), \quad n \in \mathbb N,\]
and note that $\psi_n \in \dom b$, for all $n \in \mathbb N$. Some calculations show that
\begin{align*}
b(\psi_n) - E_1(0) \|\psi_n\|^2 & = 
\int_\mathbb R |\varphi_n'|^2 \dx +  \int_\mathbb R 
\left( A (f'(x)^2 - \beta_1^2) + 2 B f'(x) g'(x) + C (g'(x)^2 -\beta_2^2) \right) |\varphi_n|^2 \dx \\
& = 
\int_\mathbb R |\varphi_n'|^2 \dx +  \int_\mathbb R V(x) |\varphi_n|^2 \dx.
\end{align*}
Since $\|\varphi'_n\|^2 = 2/n$, for all $n \in \mathbb N$, by dominated convergence theorem,
\[b(\psi_n) - E_1(0) \|\psi_n\|^2 \to  
\int_\mathbb R V(x) \dx, \quad n \to \infty.\]
Thus, there exists $n$ such that $b(\psi_n) - E_1(0) \|\psi_n\|^2 < 0$.

\vspace{0.3cm}
\noindent
{\bf Proof of Theorem \ref{theomaifatend}:}
For each $n \in \mathbb N$, consider $\varphi_n$ and $\psi_n$ as in the proof of Theorem \ref{theomainimpdis}. 
However, we add a small perturbation and we define
\[\psi_{n, \delta}(x,y) := \psi_n(x,y) + \delta \xi(x) y_1 v_1(y), \quad n \in \mathbb N,\]
where $\delta$ is a real number and $\xi \in C_0^\infty(\mathbb R)$. In this case, we have
\begin{align*}
&  
\lim_{n \to \infty} \left(b(\psi_{n, \delta}) - E_1(0) \| \psi_{n, \delta} \|^2 \right) \\
& = 
\lim_{n \to \infty}  
\left[b(\psi_n) - E_1(0) \|\psi_n\|^2 + 2 \delta 
\left(b(\psi_n, \xi y_1 v_1)-E_1(0) \int_\Lambda \psi_n \xi y_1 v_1 \dx\dy \right) \right. \\
& + 
\left. \delta^2 \left( b(\xi y_1 v_1) - E_1(0) \|\xi y_1  v_1\|^2 \right) \right] \\
&  =  
\lim_{n \to \infty} \left[ 2 \delta 
\left(b(\psi_n, \xi y_1 v_1)-E_1(0) \int_\Lambda \psi_n \xi y_1 v_1 \dx \dy\right) + 
\delta^2 \left( b(\xi y_1 v_1) - E_1(0) \|\xi y_1  v_1\|^2 \right) \right].
\end{align*}
In the last equality was used the assumption $\int_\mathbb R (f'(x)^2-\beta_1^2) \dx = 0$.
Now, we need to show that there exists a function $\xi$ satisfying 
\begin{equation}\label{maisineqproof}
\lim_{n \to \infty} \left(b(\psi_n, \xi y_1 v_1) - E_1(0) \| \psi_n, \xi y_1 v_1\|^2 \right) \neq 0.
\end{equation}
In fact, if (\ref{maisineqproof}) holds true, it is enough 
to choose  $\delta$ such that
$\left(b(\psi_{n, \delta}) - E_1(0) \| \psi_{n, \delta} \|^2 \right) < 0$, for some $n$ large enough.

Define the constant
\[ \tilde{A} := \int_S y_1 \left(\frac{\partial v_1}{\partial y_1}\right)^2 \dy. \]
Some calculations show that

\begin{align*}
& 
\lim_{n \to \infty} \left[ b(\psi_n, \xi y_1 v_1) - E_1(0) \int_\Lambda \psi_n  \xi y_1  v_1 \, \dx \dy \right]    \\ 
& = 
\lim_{n \to \infty}
\int_\Lambda \left(\varphi_n' v_1 - f'(x) \varphi_n \frac{\partial v_1}{\partial y_1} \right) 
\left(\xi' y_1 v_1 - f'(x) \xi \, v_1 - f'(x) \xi  y_1 \frac{\partial v_1}{\partial y_1} \right) \dx \dy \\
& + 
\lim_{n \to \infty} \int_\Lambda  \varphi_n \xi \langle \nabla_y v_1, \nabla_y (y_1 v_1) \rangle \dx \dy
-
E_1(0)  \lim_{n \to \infty} \int_\Lambda \varphi_n \xi y_1 v_1^2 \, \dx\dy \\
& = 
\int_\Lambda \left(- f'(x) \frac{\partial v_1}{\partial y_1}  \xi' y_1 v_1 
+ (f'(x))^2  \frac{\partial v_1}{\partial y_1} \xi v_1
+ (f'(x))^2 \xi y_1 \left(\frac{\partial v_1}{\partial y_1}\right)^2 \right) \dx \dy \\
& - 
\int_\Lambda \xi y_1 |\nabla_y v_1|^2 \dx \dy - E_1(0) \int_\Lambda   \xi y_1 v_1^2 \, \dx \dy \\
& = 
\int_\Lambda \left(- f'(x) \frac{\partial v_1}{\partial y_1}  \xi' y_1 v_1 
+ (f'(x))^2 \xi y_1 \left(\frac{\partial v_1}{\partial y_1}\right)^2 \right) \dx \dy \\
& - 
\int_\Lambda \xi y_1 |\nabla_y v_1|^2 \dx \dy - E_1(0) \int_\Lambda   \xi y_1 v_1^2 \, \dx \dy   \\
& = 
\int_\Lambda \xi \left(f''(x) \frac{\partial v_1}{\partial y_1}   y_1 v_1 
+ (f'(x))^2 y_1 \left(\frac{\partial v_1}{\partial y_1}\right)^2 \right) \dx \dy \\
& - 
\int_\Lambda \xi y_1 |\nabla_y v_1|^2 \dx \dy - E_1(0) \int_\Lambda   \xi y_1 v_1^2 \, \dx \dy   \\
& = 
\int_\Lambda \xi \left(f''(x) \frac{\partial v_1}{\partial y_1}   y_1 v_1 
+ (f'(x)^2 - \beta_1^2) y_1 \left(\frac{\partial v_1}{\partial y_1}\right)^2 \right) \dx \dy \\
& = 
\int_\mathbb R \xi \left( - \frac{f''(x)}{2}  + \tilde{A} (f'(x)^2 - \beta_1^2)  \right) \dx.
\end{align*}

If the last integral is zero for all possible choices of $\xi$, then the function $f$ satisfy the differential equation
\begin{equation}\label{eqaedona}
-\tau_1'(x) + 2 \tilde{A} (\tau_1^2(x) + 2 \beta_1 \tau_1(x))=0,
\end{equation}
where $\tau_1(x) = f'(x)-\beta_1$. The family of curves
\begin{equation}\label{solutionadly}
\tau_{1,c}(x) = \frac{2 \beta_1}{c e^{-4\tilde{A} \beta_1 x} -1}, \quad c \in \mathbb R,
\end{equation}
describes all the solutions of the equation (\ref{eqaedona}). 
Now, note that
$c> 0$ (resp. $c< 0$) implies $f'(x)^2-\beta_1^2 > 0$ (resp. $f'(x)^2-\beta_1^2 < 0$);
the case $c> 0$ adimts  singularity as $x= (4\tilde{A}\beta_1)^{-1} \ln c$.
The case $c=0$ implies that $f'$ is constant. Then, the solutions in (\ref{solutionadly}) are not admissible.
Thus, there exists a function $\xi$ satisfying 
(\ref{maisineqproof}).

\begin{Remark}{\rm
The differential equation (\ref{eqaedona}) is very similar to that find in the proof Theorem 1.2 of \cite{david}.
Intuitively, we can justify this similarity by the fact that, in the conditions of Theorem \ref{theomaifatend},
the reference curve (\ref{refcurveint}) belongs to a plane.
}
\end{Remark}

\vspace{0.3cm}
\noindent
{\bf Proof of Theorem \ref{theothinnumb}:}
The case $n=0$ is trivial. Suppose $n \geq 1$. 
We are going to show that there exists $\varepsilon_n > 0$ so that
the spectrum of $-\Delta_{\varepsilon_n}^D$ contains at least $n$ discrete eigenvalues, counting
multiplicity.
Let $I \subset \mathbb R$ be a bounded interval so that
$V(x) < 0$, for all $x \in I$. 
Define $x_0 := \inf I$ and $x_i := x_0 + i |I|/n$, for all $i \in \{1, \dots, n\}$.
Let $\psi_0$ a non-zero function from $H^1(\mathbb R)$
so that ${\rm supp} \, \psi_0 \subset (x_0, x_1)$. For each $i \in \{1,\dots, n\}$, define
\[\Phi : =\int_{x_{i-1}}^{x_i} |\psi_0(x_0 + x - x_{i-1})|^2 \dx , \quad
\psi_i(x) : = \Phi^{-2} \, \psi_0(x_0 + x - x_{i-1}), \quad x \in \mathbb R,\]
and
\[\varphi_i(x,y) : =  \psi_i(x) v_0(y), \quad (x,y) \in \Lambda.\]
The subset $\{ \varphi_i\}_{i=1}^n$ is a basis of a subspace of $H_0^1(\Lambda)$. Furthermore, for $i \neq j$, since
$\varphi_i$ and $\varphi_j$ have disjoint
supports, one has $b_\varepsilon(\varphi_i, \varphi_j) = 0$.
Some calculations show that 
\[b_\varepsilon (\varphi_i, \varphi_i) - \frac{E_1(0)}{\varepsilon^2} \|\varphi_i\|^2 = 
\int_\mathbb R  \left( |\varphi_i'|^2 + \frac{V(x)}{\varepsilon^2} |\varphi_i|^2    \right) \dx < 0,\]
for all $i \in \{1, \cdots, n\}$, for all $\varepsilon > 0$ small enough. Then, by taking $\varepsilon_n > 0$ small enough, the result
follows by Lemma 4.5.4 of \cite{livrodavies} and Theorem \ref{sigmadiscintr}.

\appendix

\section{Appendix}\label{appendix001}

\noindent
{\bf Stability of the essential spectrum}
\vspace{0.3cm}

This appendix is dedicated to the proof of Proposition \ref{essentialspectigual}.
We use the arguments of \cite{david}. In that work, the authors employed a different characterization of the spectrum 
essential which can be adapted to our problem. In fact, 

\begin{Lemma}\label{lemmasimilar}
A real number $\lambda$ belongs to the essential spectrum of $H_{f', g'}$ if, and only if, there exists a sequence 
$\{\psi_n\}_{n=1}^\infty \subset \dom b_{f',g'}$ such that the following conditions hold:

\vspace{0.2cm}
\noindent
(i) $\| \psi_n\| =1$, for all $ n \geq 1$;

\vspace{0.2cm}
\noindent
(ii) $(H_{f', g'}-\lambda \Id)\psi_n \to 0$, as $n \to \infty$, in the norm of the dual space $(\dom b_{f', g'})^*$;

\vspace{0.2cm}
\noindent
(iii) ${\rm supp}\,\psi_n \subset Q \backslash (-n,n) \times S$, for all $n \geq 1$.
\end{Lemma}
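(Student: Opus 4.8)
The plan is to prove the two implications separately, dispatching the ``if'' direction quickly and concentrating the work on the converse. Throughout I would exploit that, since $f',g'\in L^\infty(\R)$, the form $b_{f',g'}$ is comparable to the standard Dirichlet form. Writing the longitudinal combination as $D\psi := \psi' - f'\,\partial_{y_1}\psi - g'\,\partial_{y_2}\psi$, the elementary inequality $|\psi'|^2\le 2|D\psi|^2 + C|\nabla_y\psi|^2$ shows that $\dom b_{f',g'}=H_0^1(\Lambda)$ with equivalent norms; in particular, on every bounded slab $(-R,R)\times S$ the embedding of the form domain into $L^2$ is compact by Rellich's theorem, a fact I would use repeatedly.

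For the implication ($\Leftarrow$), suppose $\{\psi_n\}$ satisfies (i)--(iii). Condition (iii) forces $\psi_n\wseta 0$: for any $\phi\in C_0^\infty(\Lambda)$ the supports are eventually disjoint, so $\la\psi_n,\phi\ra=0$ for large $n$, and since $\|\psi_n\|=1$ and $C_0^\infty(\Lambda)$ is dense, $\psi_n$ tends weakly to $0$. Together with (i)--(ii) this is exactly a singular Weyl sequence, whence $\lambda\in\sigma_{ess}(H_{f',g'})$ by the quadratic-form version of Weyl's criterion.

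For the converse I would first invoke the standard criterion to obtain $\{u_n\}\subset\dom b_{f',g'}$ with $\|u_n\|=1$, $u_n\wseta 0$ and $(H_{f',g'}-\lambda\Id)u_n\to 0$ in $(\dom b_{f',g'})^*$. The pairing identity $b_{f',g'}(u_n)-\lambda=\la(H_{f',g'}-\lambda\Id)u_n,u_n\ra$ yields $b_{f',g'}(u_n)\le\lambda+\epsilon_n\,(b_{f',g'}(u_n)+1)^{1/2}$ with $\epsilon_n\to 0$, so $\{u_n\}$ is bounded in the form norm. Combining this bound with $u_n\wseta 0$ and Rellich's theorem (via a subsequence-of-every-subsequence argument, the weak limit always being $0$) shows that $u_n\to 0$ strongly in $L^2((-R,R)\times S)$ for every fixed $R$; that is, the $L^2$-mass of $u_n$ escapes to infinity. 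I would then cut off near the origin over longer and longer scales: pick $\chi_R\in C^\infty(\R)$ with $\chi_R\equiv 0$ on $(-R,R)$, $\chi_R\equiv 1$ on $\{|x|\ge 2R\}$, $0\le\chi_R\le 1$ and $\|\chi_R'\|_\infty\le C/R$, and set $\psi_{n,R}:=\chi_R u_n$, which already has support in $\{|x|\ge R\}$. Since $\|(1-\chi_R)u_n\|\le\|u_n\|_{L^2((-2R,2R)\times S)}\to 0$ as $n\to\infty$ for fixed $R$, one has $\|\psi_{n,R}\|\to 1$. For the approximate-eigenvalue condition I would test against $\phi\in\dom b_{f',g'}$ and use the identity
\[
b_{f',g'}(\chi_R u_n,\phi)-\lambda\la\chi_R u_n,\phi\ra
=\la(H_{f',g'}-\lambda\Id)u_n,\chi_R\phi\ra
+\int_\Lambda \chi_R'\bigl(u_n\,\overline{D\phi}-Du_n\,\overline{\phi}\bigr)\dx\dy,
\]
which follows because moving $\chi_R$ across $D$ produces only the factor $\chi_R'$ (the $\nabla_y$ terms and the $\lambda$ terms cancel exactly). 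The first term is bounded by $\|(H_{f',g'}-\lambda\Id)u_n\|_*\,\|\chi_R\phi\|_{\dom b}\le C\,\|(H_{f',g'}-\lambda\Id)u_n\|_*\to 0$ as $n\to\infty$ (using $\|\chi_R\phi\|_{\dom b}\le C\|\phi\|_{\dom b}$ uniformly in $R\ge 1$), while the second is bounded by $\|\chi_R'\|_\infty\bigl(\|u_n\|\,\|D\phi\|+\|Du_n\|\,\|\phi\|\bigr)\le C'/R$, uniformly in $n$, thanks to the form-norm bounds. A diagonal choice $R=R_k\uparrow\infty$ and $n=n_k$ then gives, after normalization, a sequence meeting (i)--(iii).

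I expect the main obstacle to be organizing this double limit cleanly: the first term above is killed only as $n\to\infty$ (with $R$ fixed) while the commutator term is killed only as $R\to\infty$, so one must verify that the commutator bound $C'/R$ is genuinely uniform in $n$ — which is exactly what the explicit $\chi_R'$-only form of the commutator and the uniform form-norm bound on $\{u_n\}$ guarantee — in order to extract a single diagonal sequence $\psi_k=\psi_{n_k,R_k}/\|\psi_{n_k,R_k}\|$ satisfying all three conditions at once.
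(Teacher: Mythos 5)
Your proposal is correct, and it is essentially the argument the paper points to: the paper itself gives no proof, deferring to Lemma 5 of the sheared-nanoribbons paper \cite{david}, whose proof has exactly your structure (the generalized form-version of Weyl's criterion for the easy direction; cutoffs $\chi_R$ with $\|\chi_R'\|_\infty\le C/R$, Rellich compactness on bounded slabs to show the $L^2$-mass escapes, the commutator identity producing only $\chi_R'$-terms, and a diagonal choice of $(n_k,R_k)$ for the converse). Your uniform-in-$n$ bound $C'/R$ on the commutator term and the uniform-in-$R$ bound $\|\chi_R\phi\|_{\dom b_{f',g'}}\le C\|\phi\|_{\dom b_{f',g'}}$ are precisely the points that make the double limit work, so the proof is complete.
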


The proof of Lemma \ref{lemmasimilar} is very similar to the proof of Lemma 5 of \cite{david}, it will not 
presented in this text.

\vspace{0.3cm}
\noindent
{\bf Proof of Propositon \ref{essentialspectigual}:}
Let $\lambda \in \sigma_{ess}(H_{\beta_1, \beta_2})$. By Lemma \ref{lemmasimilar}, there exists a sequence 
$\{\psi_n\}_{n=1}^\infty \subset \dom b_{\beta_1, \beta_2}$ such that
the conditions $(i)-(iii)$ are satisfied. For simplicity, write
\[\tau_1(x):= f'(x) - \beta_1, \quad \tau_2(x):= g'(x) - \beta_2.\]
Some calculations show that
\begin{align*}
b_{f',g'}(\varphi, \psi_n)  -  \lambda \langle \varphi, \psi_n \rangle  
& = 
b_{\beta_1, \beta_2} (\varphi, \psi_n) -  \lambda \langle \varphi, \psi_n \rangle  \\
& + 
\int_\Lambda \left(-\tau_1 \frac{\partial \varphi}{\partial y_1} -\tau_2 \frac{\partial \varphi}{\partial y_2}\right)
\left(\psi_n' - \beta_1 \frac{\partial \psi_n}{\partial y_1} - \beta_2 \frac{\partial \psi_n}{\partial y_2}\right)  \dx \dy \\
& +  
\int_\Lambda \left(\varphi' - (\beta_1 + \tau_1) \frac{\partial \varphi}{\partial y_1} - 
(\beta_2+\tau_2) \frac{\partial \varphi}{\partial y_2}\right)
\left(-\tau_1 \frac{\partial \psi_n}{\partial y_1} -\tau_2 \frac{\partial \psi_n}{\partial y_2}\right) 
\dx \dy.
\end{align*}
Since $\|\partial \varphi/ \partial y_1\|^2, \| \partial \varphi/ \partial y_2\|^2 \leq b_{f',g'}(\varphi) = \| \varphi \|_+^2$,
one has

\begin{align*}
&  
\sup_{0 \neq \varphi \in H_0^1(\Lambda)} \left\{
\int_\Lambda \left(-\tau_1 \frac{\partial \varphi}{\partial y_1} -\tau_2 \frac{\partial \varphi}{\partial y_2}\right)
\left(\psi_n' - \beta_1 \frac{\partial \psi_n}{\partial y_1} - \beta_2 \frac{\partial \psi_n}{\partial y_2}\right)  \dx \dy /
\| \varphi \|_+ \right\} \\
& \leq 
\left( \|\tau_1\|_{L^\infty(\mathbb R \backslash (-n,n))} + \|\tau_2\|_{L^\infty(\mathbb R \backslash (-n,n))} \right) 
\left( \|\psi_n'\| + \beta_1 \left \| \frac{\partial \psi_n}{\partial y_1} \right\| + \beta_2 \left \| \frac{\partial \psi_n}{\partial y_2} \right \| \right)
\to 0,
\end{align*}
and
\begin{align*}
& 
\sup_{0 \neq \varphi \in H_0^1(\Lambda)} \left\{
\int_\Lambda
\left(\varphi' - (\beta_1 + \tau_1) \frac{\partial \varphi}{\partial y_1} - (\beta_2+\tau_2) \frac{\partial \varphi}{\partial y_2}\right)
\left(-\tau_1 \frac{\partial \psi_n}{\partial y_1} -\tau_2 \frac{\partial \psi_n}{\partial y_2}\right) 
\dx \dy  / \|\varphi\|_+ \right \} \\
& \leq 
\|\tau_1\|_{L^\infty(\mathbb R \backslash (-n,n))} \left \| \frac{\partial \psi_n}{\partial y_1} \right\|  + 
\|\tau_2\|_{L^\infty(\mathbb R \backslash (-n,n))}  \left \| \frac{\partial \psi_n}{\partial y_2} \right \|
\to 0,
\end{align*}
as $n \to \infty$. Then, $\lambda \in \sigma_{ess}(H_{f',g'})$. The inclusion 
$\sigma_{ess}(H_{f',g'}) \subset \sigma_{ess}(H_{\beta_1, \beta_2})$ can be obtained in a similar way.

\section{Appendix}\label{appendix01}

\noindent
{\bf Asymptotic behavior of eigenvalues}

\vspace{0.3cm}

\noindent
Let $W: \mathbb R \to \mathbb R$ be a bounded function and $\mu \in \mathbb R \backslash \{0\}$. 
Define $W_{min} := \, {\rm inf}_{x \in \mathbb R} \{W(x)\}$.
Consider the quadratic form
\[n_\mu(\phi) = \int_\mathbb R |\phi'|^2 \dx + \mu \int_\mathbb R W(x) |\phi|^2 \dx, \quad
\dom n_\mu = H^1(\mathbb R).\]
Denote by $n_\mu(\psi, \varphi)$ and $N_\mu$  its sesquilinear form and its self-adjoint operator associated, respectively.
Consider the sequence $\{\lambda_j(N_\mu)\}_{j \in \mathbb N}$ given by Min-Max Principle; 
see, e.g., Theorem XIII.1 of \cite{reed}.

The next result is a simpler version of Theorem 4 of \cite{freitasdavid}. 

\begin{Theorem}\label{theoappx}
For each $j \in \mathbb N$,
\[\liminf_{\mu \to +\infty} \frac{\lambda_j(N_\mu)}{\mu} = W_{min}.\] 
\end{Theorem}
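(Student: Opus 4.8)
\textbf{The plan} is to establish the two-sided bound
\[
W_{min}\;\le\;\liminf_{\mu\to+\infty}\frac{\lambda_j(N_\mu)}{\mu}
\quad\text{and}\quad
\limsup_{\mu\to+\infty}\frac{\lambda_j(N_\mu)}{\mu}\;\le\;W_{min},
\]
since together these show that the full limit exists and equals $W_{min}$, which in particular yields the stated identity for the $\liminf$. The lower bound is immediate from the pointwise estimate $W(x)\ge W_{min}$: for every $\phi\in H^1(\mathbb R)$ and every $\mu>0$ one has $n_\mu(\phi)\ge \mu W_{min}\,\|\phi\|^2$, so that $N_\mu\ge \mu W_{min}\,\Id$ in the sense of quadratic forms. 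By the Min-Max Principle every Rayleigh quotient of $n_\mu$ is at least $\mu W_{min}$, hence $\lambda_j(N_\mu)\ge \mu W_{min}$ for all $j$ and all $\mu>0$, and dividing by $\mu$ gives the first inequality.

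For the upper bound I would exhibit, for each $\delta>0$, a \emph{fixed} $j$-dimensional trial subspace of $H^1(\mathbb R)$, independent of $\mu$, on which the Rayleigh quotient of $n_\mu$ is at most $\mu(W_{min}+\delta)+O(1)$. Since $W_{min}=\inf_{x}W(x)$, I would first choose a bounded interval $I$ of positive length on which $W(x)<W_{min}+\delta$. Let $\chi_1,\dots,\chi_j$ be the first $j$ eigenfunctions of the Dirichlet Laplacian on $I$, with eigenvalues $\nu_1\le\cdots\le\nu_j$ depending only on $j$ and $|I|$; extended by zero they lie in $H^1(\mathbb R)$ and are $L^2$-orthonormal. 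For $\phi=\sum_{k=1}^j c_k\chi_k$ with $\|\phi\|=1$, the kinetic term is $\sum_k \nu_k|c_k|^2\le \nu_j$, while $\int_I W|\phi|^2\le (W_{min}+\delta)$, so that $n_\mu(\phi)\le \nu_j+\mu(W_{min}+\delta)$. Applying the Min-Max Principle to the span of $\{\chi_k\}$ gives $\lambda_j(N_\mu)\le \nu_j+\mu(W_{min}+\delta)$, whence $\lambda_j(N_\mu)/\mu\le \nu_j/\mu+W_{min}+\delta$. Letting $\mu\to+\infty$ and then $\delta\to 0$ yields the second inequality.

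The main obstacle is precisely this localization step: the trial functions must concentrate where $W$ is close to its infimum while keeping their kinetic energy bounded uniformly in $\mu$, so that the $O(1)$ kinetic cost is swamped by the factor $\mu$. The decisive feature is that $\nu_j$ depends only on $j$ and $|I|$, not on $\mu$. A secondary technical point is the interpretation of $W_{min}$: the clean choice of $I$ with $W<W_{min}+\delta$ throughout is available when $W$ is continuous, which is the case for $W=V$ in the applications of this paper; for merely bounded measurable $W$ one reads $W_{min}$ as the essential infimum and selects trial functions supported on the positive-measure sublevel set $\{x: W(x)<W_{min}+\delta\}$, at the cost of replacing the Dirichlet eigenbasis by suitably regularized functions adapted to that set.
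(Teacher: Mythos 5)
Your proof is correct, but the upper bound is obtained by a genuinely different construction than the paper's. The paper also starts from $N_\mu \geq \mu W_{min}\Id$ for the lower bound, but for the reverse inequality it argues abstractly: since the spectrum of the multiplication operator ${\cal M}_W$ is purely essential and contains $W_{min}$, the spectral theorem yields an orthonormal Weyl sequence $\{\psi_i\}$ with $\|({\cal M}_W-W_{min}\Id)\psi_i\|\to 0$; this is approximated by $C_0^\infty$ functions, and for each $N$ a block $\{\varphi_{1+k},\dots,\varphi_{N+k}\}$ far along the sequence is used as an $N$-dimensional trial space, the eigenvalues of the resulting form matrix being bounded by $\mu(W_{min}+N^{-1})+d(N)$, where $d(N)$ collects the (fixed, $\mu$-independent) kinetic contributions. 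Your construction replaces all of this by explicit trial functions: Dirichlet eigenfunctions on an interval $I$ where $W<W_{min}+\delta$, extended by zero, giving the transparent bound $\lambda_j(N_\mu)\leq \mu(W_{min}+\delta)+\nu_j$ with $\nu_j = j^2\pi^2/|I|^2$. Your route is more elementary (no spectral theorem, no approximate orthonormality bookkeeping, exactly orthogonal kinetic terms) and it cleanly delivers the full limit, not just the $\liminf$. What the paper's route buys is generality: it works for any bounded measurable $W$ without having to locate an interval inside a sublevel set, which is exactly the step where your argument needs continuity of $W$ or the regularization you sketch. Note, though, that the interpretive caveat you raise cuts both ways: the paper's claim $W_{min}\in\sigma({\cal M}_W)$ is itself only valid when $W_{min}$ is read as the essential infimum (a point of the essential range), so the paper's proof carries the same implicit hypothesis that you make explicit.
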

\begin{proof}
Since $N_\mu \geq \mu W_{min} \Id$, by Min-Max Principle, 
\[\liminf_{\mu \to +\infty} \frac{\lambda_j(N_\mu)}{\mu} \geq  W_{min}.\]
Now, we need to obtain the opposite estimate. 
Consider the multiplication operator
\[{\cal M}_W \phi = W \phi, \quad \dom {\cal M}_W = \{ \phi \in L^2(\mathbb R): W \phi \in L^2(\mathbb R)\}.\]
The spectrum ${\cal M}_W$ is purely essential and equal to the essential range of the generated function $W$.
In particular, $W_{min} \in \sigma({\cal M}_W)$. By the spectral theorem, there exists a sequence
$\{ \psi_i \}_{i \in \mathbb N}$ orthonormalized in $L^2(\mathbb R)$ so that
$\| ({\cal M}_W - W_{min} \Id) \psi_i\| \to 0$, as $i \to \infty$.
Since $C_0^\infty(\mathbb R)$ is dense in $\dom {\cal M}_W$, it follows that there is also a
sequence $\{ \varphi_i \}_{i \in \mathbb N}$ satisfying
\[ \langle \varphi_i, \varphi_j \rangle - \delta_{ij} \to 0, \quad
\langle \varphi_i,  ({\cal M}_W- W_{min}) \varphi_j \rangle \to 0,\]
as $i,j \to \infty$.
Now, given $N \in \mathbb N$, take $k=k(\mathbb N)$ sufficiently large so that
\[A(N) - W_{min} \Id \leq N^{-1} \Id,\]
where $A(N)$ is a symmetric matrix with entries
$\langle \varphi_{i+k}, \varphi_{j+k} \rangle$, for $i, j \in \{1, \dots, N\}$.
Since the subspace generated by $\{\varphi_{1+k}, \dots, \varphi_{N+k}\}$
is a $N$-dimensional subspace of $\dom N_\mu$, one has
$\lambda_j(N_\mu) \leq c_j(N_\mu)$, for all $j \in \{1, \dots, N\}$, 
where $\{c_j(N_\mu)\}_{j=1}^N$ are the eigenvalues
(written in increasing order and repeated according to multiplicity) of
the matrix $C(N_\mu) : = C_{ij}(μN_\mu)$ defined by
$C_{ij}(μN_\mu) = n_\mu(\varphi_{i+k}, \varphi_{i+j})$.
Now, we can see that
\[C(N_{\mu}) \leq \mu (W_{min} + N^{-1}) \Id + d(N) \Id,\]
where $d(N)$ denotes the maximal eigenvalue of the matrix with entries $(\langle \nabla \varphi_{i+k}, \nabla \varphi_{j+k} \rangle)$.
Then, follows that
\[\liminf_{\mu \to +\infty} \frac{\lambda_j(N_\mu)}{\mu} \leq  W_{min} + N^{-1},\]
for $j \in \{1,2, \cdots, N\}$, with $N$ being arbitrarily large.
\end{proof}




\begin{thebibliography}{999}






\bibitem{bori1}
D. Borisov, P. Exner, R. Gadyl'shin, and D. Krej${\rm\check{c}}$\'i${\rm\check{r}}$ik,
Bound states in weakly deformed strips and layers,
{\it Ann. H. Poincar\'e} {\bf 2} (2001), 553-572.


\bibitem{bori2}
D. Borisov, P. Exner, and R. Gadyl'shin,
Geometric coupling thresholds in a two-dimensional strip,
{\it J. Math. Phys.} {\bf 43} (2002), 6265-6278.




\bibitem{bmt}
G. Bouchitt\'e, M. L. Mascarenhas and L. Trabucho,
On the curvature and torsion effects in one-dimensional waveguides,
{\it ESAIM, Control Optim. Calc. Var.} {\bf 13} (2007), 793--808.



\bibitem{david}
P. Briet, H. Abdou-Soimadou and D. Krej${\rm\check{c}}$\'i${\rm\check{r}}$ik,
Spectral analysis of sheared nanoribbons. 
{\it Z. Angew. Math. Phys.} {\bf 70} (2019), p.48.


\bibitem{briet}
P. Briet, H. Kova$\rm{\check{r}}$\'ik, G. Raikov and E. Soccorsi,
Eigenvalue asymptotics in a twisted waveguide,
{\it Comm. Partial Differential Equations} {\bf 34} (2009), 818-836.



\bibitem{popoff01}
V. Bruneau, P. Miranda, D. Parra, and N. Popoff,
Eigenvalue and resonance asymptotics in perturbed periodically twisted tubes: Twisting versus bending,
{\it Ann. H. Poincar\'e} {\bf 21} (2020), 377-403.


\bibitem{popoff02}
V. Bruneau, P. Miranda, and N. Popoff,
Resonances near thresholds in slightly twisted waveguides,
{\it Proc. Amer. Math. Soc.} {\bf 146} (2018), 4801-4812.






\bibitem{duclosfk} B. Chenaud, P. Duclos, P. Freitas and D. Krej${\rm\check{c}}$\'i${\rm\check{r}}$ik,
Geometrically induced discrete spectrum in curved tubes,
{\it Differential Geom. Appl.} {\bf 23} (2005), 95-105.



\bibitem{clark}
I.J. Clark and A.J. Bracken, 
Bound states in tubular quantum waveguides with torsion,
{\it J. Phys. A: Math. and Gen.} {\bf 29} (1996), p.4527.














\bibitem{livrodavies}
E. B. Davies, 
{\it Spectral theory and differential operators}, 
Camb. Univ Press, Cambridge, 1995. 











\bibitem{pavelduclos}
P. Duclos, P. Exner, and D. Krej${\rm\check{c}}$\'i${\rm\check{r}}$ik,
Bound states in curved quantum layers,
{\it Commun. Math. Phys.}  {\bf 223} (2001), 13-28.















\bibitem{duclos}
P. Duclos and P. Exner,
Curvature-induced bound states in quantum waveguides in two and three dimensions,
{\it Rev. Math. Phys.} {\bf 07} (1995),  73--102.


\bibitem{ekholmk}
T. Ekholm, H. Kovarik and D. Krej${\rm\check{c}}$\'i${\rm\check{r}}$ik:
A Hardy inequality in twisted waveguides,
{\it Arch. Ration. Mech. Anal.} {\bf 188} (2008), 245-264.



\bibitem{evans}
L. C. Evans,
{\it Partial differential equations: second edition},
American Mathematical Soc., 2010.



\bibitem{bookexner}
P. Exner and H. Kova$\rm{\check{r}}$\'ik, {\it Quantum waveguides}, Springer, 2015.



\bibitem{brietnew}
P. Exner and H. Kova$\rm{\check{r}}$\'ik, 
Spectrum of the Sch\"odinger operator in a perturbed periodically twisted tube,
{\it Letters in Mathematical Physics} {\bf 73} (2005), 183-192.



\bibitem{exnerseba}
P. Exner and P. $\rm{\check{S}}$eba, 
Bound states in curved quantum waveguides,
{\it J. Math. Phys.} {\bf 30} (1989), 2574-2580.










\bibitem{freitasdavid}
P. Freitas and D. Krej${\rm\check{c}}$\'i${\rm\check{r}}$ik,
Instability results for the damped wave equation in unbounded domains,
{\it J. Differential Equations} {\bf 211} (2005), no. 1, 168-186.




\bibitem{friedcabs}
L. Friedlander:
Absolute continuity of the spectra of periodic waveguides,
{\it Contemp. Math.} {\bf 339} (2003),  37-42.


\bibitem{gold}
J. Goldstone and R. L. Jaffe, 
Bound states in twisting tubes, 
{\it Phys. Rev. B} \, {\bf 45} (1992), p.14100.



\bibitem{grush01}
V. V. Grushin, 
Asymptotic behavior of the eigenvalues of the Schr\"odinger operator with
transversal potential in a weakly curved infinite cylinder, 
{\it Math. Notes} {\bf 77} (2005), 606-613.
 
\bibitem{kato}
T. Kato: {\it Perturbation Theory for Linear Operators}, Springer-Verlag, Berlin, 1995.



\bibitem{davidtversusb}
D. Krej${\rm\check{c}}$\'i${\rm\check{r}}$ik,
Twisting versus bending in quantum waveguides,
{\it Analysis on Graphs and Applications} (Cambridge 2007), in: Proc. Sympos. Pure Math.,
Amer. Math. Soc., Providence, RI, {\bf 77} (2008), 617-636.



\bibitem{davidruled}
D. Krej${\rm\check{c}}$\'i${\rm\check{r}}$ik and R. T. de Aldecoa,
Ruled strips with asymptotically diverging twisting,
{\it Ann. H. Poincar\'e} {\bf 19} (2018), 2069-2086.



\bibitem{davidalde}
 D. Krej${\rm\check{c}}$\'i${\rm\check{r}}$ik and R. T. de Aldecoa,
The nature of the essential spectrum in curved quantum waveguides,
{\it J. Phys. A}  {\bf 37} (2004), 5449-5466.


\bibitem{davidkr}
D. Krej${\rm\check{c}}$\'i${\rm\check{r}}$ik and J. K${\rm\check{r}}$\'i${\rm\check{z}}$,
On the spectrum of curved planar waveguides,
Publ. RIMS, Kyoto University, {\bf 41} (2005), 757-791.




\bibitem{kovsac}
H. Kovarik and A. Sacchetti,
Resonances in twisted quantum waveguides, 
{\it J. Phys. A} {\bf 40} (2007), 8371-8384.





\bibitem{reed}
M. Reed and B. Simon,
{\it Methods of Modern Mathematical Physics, IV. Analysis of
Operators}, Academic Press, New York, 1978. 

\bibitem{renger}
W. Renger and W. Bulla,
Existence of bound states in quantum waveguides under weak conditions,
{\it Lett Math Phys} {\bf 35} (1995), 1-12.



\end{thebibliography}
\end{document}